\numberwithin{equation}{section}
\newtheorem{theorem}{Theorem}[section]
\newtheorem{proposition}[theorem]{Proposition}%[section]
\newtheorem{remark}[theorem]{Remark}
\newtheorem{definition}[theorem]{Definition}
\newtheorem{assumption}[theorem]{Assumption}
\newcommand{\N}{\mathbb{N}}
\newcommand{\Z}{\mathbb{Z}}
\newcommand{\R}{\mathbb{R}}
\newcommand{\dd}{\mathrm{d}}
\renewcommand{\P}{\mathrm{P}}
\newcommand{\Q}{\mathrm{Q}}
\newcommand{\bP}{\mathbf{P}}
\newcommand{\bQ}{\mathbf{Q}}
\newcommand{\E}{\mathrm{E}}
\newcommand{\ind}{{\mathds 1}}
\newcommand{\cR}{\mathcal{R}}
\newcommand{\bV}{\boldsymbol{V}}
\newcommand{\bz}{\boldsymbol{z}}
\newcommand{\sfP}{\mathsf{P}}
\newcommand{\sfE}{\mathsf{E}}
\newcommand{\sfC}{\mathsf{C}}
\newcommand\rig{\mathrm{fin}}
\newcommand\bulk{\mathrm{bulk}}
\newcommand\bridge{\mathrm{bri}}
\newcommand\meander{\mathrm{mea}}
\newcommand\excursion{\mathrm{exc}}
\newcommand\meandertwo{\mathrm{mea2}}
\newcommand\excursiontwo{\mathrm{exc2}}
\date{\today}
\title[Maximum of conditioned random walks]{On the maximum
of conditioned random walks\\ and tightness
for pinning models}
\author{Francesco Caravenna}
\address{Dipartimento di Matematica e Applicazioni, 
Universit\`a degli Studi di Milano-Bicocca,
via Cozzi 55,
20125 Milano, Italy}
\email{francesco.caravenna@unimib.it}
\subjclass[2010]{82B41, 60K35, 60B10}
\keywords{Random Walk, Bridge, Excursion, Conditioning to Stay Positive,
Uniform Integrability, Polymer Model,
Pinning Model, Wetting Model,
Tightness.}
\begin{document}

\begin{abstract}
We consider real random walks with finite variance.
We prove an optimal integrability result for the diffusively rescaled maximum,
when the  walk or its bridge
is conditioned to stay positive,
or to avoid zero.
As an application, 
we prove tightness under diffusive rescaling for general
pinning and wetting models based on random walks.
\end{abstract}

\maketitle

\section{Introduction}

In this paper we deal with random walks on $\R$,
with zero mean and finite variance.

\smallskip

In Section~\ref{sec:rw} we consider the random walks, or their bridges,
conditioned to stay positive on a finite time interval. We prove that the maximum
of the walk, diffusively rescaled, has a \emph{uniformly integrable square}.
The same result is proved under the conditioning that the walk avoids zero.

\smallskip

In Section~\ref{sec:meas} we present 
an application to \emph{pinning and wetting models}
built over random walks.
More generally, we consider probabilities 
which admit suitable
regeneration epochs,
which cut the path into independent ``excursions''.
We prove that these models, under diffusive rescaling,
are tight in the space of continuous functions.
This fills a gap in
the proof of \cite[Lemma~4]{cf:DGZ}.

\smallskip

Sections~\ref{sec:prmain2}, \ref{sec:m3}, \ref{sec:main1} contain the proofs.

\smallskip

This paper generalizes and supersedes 
the unpublished manuscript 
\cite{cf:CGZ?}.

\section{Random walks conditioned to stay positive, or to avoid zero}
\label{sec:rw}

We use the conventions $\N := \{1,2,3,\ldots\}$ and $\N_0 := \N \cup \{0\}$.
Let $(X_i)_{i\in\N}$ be i.i.d.\ real random variables.
Let $(S_n)_{n\in\N_0}$ be the associated random walk:
\begin{equation*}
	S_0 := 0 \,, \qquad
	S_n := X_1 + \ldots + X_n \quad \text{for} \ n \in \N \,.
\end{equation*}

\begin{assumption} \label{ass:rw}
$\E[X_1]=0$, $\E[X_1^2]= \sigma^2 < \infty$,
and one of the following cases hold.
\begin{itemize}
\item \emph{Discrete case}.
The law of $X_1$ is integer valued and, for simplicity, 
the random walk is aperiodic, i.e.\ $\P(S_n = 0) > 0$ for large $n$,
say $n \ge n_0$.

\smallskip
\item \emph{Continuous case}.
The law of $X_1$ has a density with respect to the Lebesgue measure, and
the density of $S_n$ is essentially bounded for some $n \in \N$:
\begin{equation*}
	f_n(x) \,:=\,  \frac{\P(S_n \in \dd x)}{\dd x} \,\in\, L^\infty \,.
\end{equation*}
It follows that for large $n$,
say $n \ge n_0$, $f_n$ is bounded and continuous, and $f_n(0) > 0$.
\end{itemize}
\end{assumption}

Let us denote by $\P_n$ the law of the first $n$ steps of the walk:
\begin{equation} \label{eq:Pn}
	\P_n := \P\big( (S_0, S_1, \ldots, S_n) \in \,\cdot\, \big) \,.
\end{equation}
Next we define the laws of the
\emph{meander}, \emph{bridge} and \emph{excursion}:
\begin{equation} \label{eq:laws}
\begin{split}
	\P_n^\meander(\,\cdot\,) & := \P\big((S_0, S_1, \ldots, S_n) \in \,\cdot\, 
	\,\big|\, S_1 > 0, S_2 > 0,
	\ldots, S_n>0\big) \,, \\
	\P_n^\bridge(\,\cdot\,) & := 
	\P\big((S_0, S_1, \ldots, S_n) \in \,\cdot\,\, \big| \, S_n=0 \big) \,, \\
	\P_n^\excursion(\,\cdot\,) & := \P\big((S_0, S_1, \ldots, S_n) \in \,\cdot\, 
	\,\big|\, S_1 > 0, S_2 > 0,
	\ldots, S_{n-1}>0, S_n = 0\big) \,.
\end{split}
\end{equation}
In Remark~\ref{rem:technical} below
we discuss the conditioning on $\{S_n = 0\}$, and periodicity issues.

\smallskip

Our main result concerns the integrability of the absolute maximum of the walk:
\begin{equation} \label{eq:Mn}
	M_n := \max_{0 \le i \le n} |S_i| \,.
\end{equation}

\begin{theorem}\label{th:main2}
Let Assumption~\ref{ass:rw} hold. Then
$M_n^2 / n$ is uniformly integrable under any of the laws $\Q \, \in \, \big\{ \P_n,  \, \P_n^\bridge, \,
\P_n^\meander, \, \P_n^\excursion \big\}$:
\begin{equation}\label{eq:ui}
	\lim_{K \to \infty} \ \sup_{n\in\N} \
	\E_\Q \bigg[ \frac{M_n^2}{n} \, \ind_{\big\{
	\frac{M_n^2}{n} > K \big\}} \bigg] = 0 \,.
\end{equation}
\end{theorem}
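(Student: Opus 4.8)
The plan is to reduce all four laws to a uniform integrability estimate that ultimately rests on the plain random walk $\P_n$, for which $M_n^2/n$ is uniformly integrable by Doob's $L^2$-maximal inequality applied to the martingale $(S_i)$ (so $\E[M_n^2] \le 4\sigma^2 n$), together with a standard central limit / Skorokhod argument to upgrade the bound to uniform integrability rather than mere boundedness in $L^1$. Indeed, $M_n^2/n$ converges in distribution (to $\sup_{[0,1]}|B|^2$ under $\P_n$, and to the analogous functional of a Brownian meander/bridge/excursion under the other laws) by Donsker-type invariance principles, so uniform integrability is equivalent to convergence of the first moments $\E_\Q[M_n^2/n]$, and it suffices to control those moments uniformly in $n$ together with a tail estimate.

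First I would dispose of the meander and excursion by an \emph{absolute continuity} (Radon--Nikodym) comparison with the bridge and the free walk. The key structural fact is that the meander law $\P_n^\meander$ is $\P_n$ conditioned on the event $A_n := \{S_1>0,\dots,S_n>0\}$, whose probability decays only polynomially, $\P(A_n) \asymp c/\sqrt n$ (this is the classical fluctuation-theory estimate under finite variance; here the discrete/continuous aperiodicity hypotheses guarantee the sharp asymptotics). Hence for any nonnegative functional $F$ of the path, $\E_n^\meander[F] \le \frac{1}{\P(A_n)} \E_n[F] \le C\sqrt n\, \E_n[F]$. Applied naively to $F = \tfrac{M_n^2}{n}\ind_{\{M_n^2/n>K\}}$ this loses a factor $\sqrt n$, so instead I would split the path at its midpoint: decompose $A_n \supseteq$ (walk stays positive on $[0,n/2]$) $\cap$ (walk stays positive on $[n/2,n]$), use the Markov property at time $n/2$, and bound $M_n \le M_{n/2}^{(1)} + M_{n/2}^{(2)}$ in terms of the two halves; each half, after conditioning to stay positive on an interval of length $n/2$, is handled by the $\sqrt{n/2}$-type estimate but now the probabilistic cost is absorbed because we only pay for \emph{half} the staying-positive event while gaining independence. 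The cleanest route, which I would actually pursue, is to invoke the known fact that $\P_n^\meander$ restricted to the first $\lfloor n/2\rfloor$ steps is absolutely continuous with respect to $\P_{\lfloor n/2\rfloor}^\meander$ with a bounded density, and that $\P_n^\meander$ restricted to the \emph{last} $\lceil n/2\rceil$ steps is absolutely continuous with respect to a time-reversed walk conditioned appropriately; combined with the elementary bound $M_n^2 \le 2(M'^2 + M''^2)$ for the two halves, uniform integrability on each half transfers to the whole. The same splitting handles the excursion via its decomposition into a meander and a (reversed) meander glued at a point, using the local limit theorem ($f_n(0)>0$, resp.\ $\P(S_n=0)>0$) to control the gluing density.

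For the bridge $\P_n^\bridge$ I would use exactly the same midpoint decomposition: conditionally on $S_n=0$, the first half is absolutely continuous w.r.t.\ the free walk $\P_{\lfloor n/2 \rfloor}$ with density $\tfrac{f_{\lceil n/2\rceil}(-S_{\lfloor n/2\rfloor})}{f_n(0)}$, which by the local CLT is bounded uniformly in $n$ on the scale $|S_{\lfloor n/2\rfloor}| \le C\sqrt n$; on the complementary event $\{|S_{\lfloor n/2\rfloor}| > C\sqrt n\}$ one uses a Gaussian-type upper bound for $f_{\lceil n/2\rceil}$ to show the contribution to $\E_n^\bridge[M_n^2/n\,\ind_{\{\cdot>K\}}]$ is negligible as $C\to\infty$. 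Symmetrically for the second half. Thus bridge-UI reduces to free-walk UI, which is Doob.

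The main obstacle, and where I would spend the most care, is making the absolute-continuity densities genuinely \emph{bounded uniformly in $n$} on the relevant scale and \emph{negligible off that scale} — i.e.\ establishing the right local-limit and fluctuation estimates ($\P(A_n)\sim c/\sqrt n$, uniform local CLT for $f_n$, sub-Gaussian tail bounds for $S_n$) under the bare finite-variance hypothesis rather than under stronger moment assumptions. These are classical but delicate in the continuous case because one only knows $f_n \in L^\infty$ for \emph{some} $n$, so one must first propagate boundedness and continuity to all large $n$ (as already noted in Assumption~\ref{ass:rw}) and then obtain the needed uniform-in-$n$ Gaussian envelope; Stone's local limit theorem together with a truncation argument should suffice. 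Once these analytic inputs are in place, the probabilistic core is the short Doob + midpoint-splitting argument described above.
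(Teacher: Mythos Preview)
Your bridge and free-walk arguments are essentially the paper's (midpoint Radon--Nikodym with density $f_{\lceil n/2\rceil}(-S_{\lfloor n/2\rfloor})/f_n(0)$ for the bridge, bounded by the local CLT; Doob for the free walk, best applied to the submartingale $(|S_i|-K)^+$ so that UI of $M_n^2/n$ follows from UI of $S_n^2/n$, which is immediate from $\E[S_n^2/n]=\sigma^2$). The excursion-to-meander reduction via a midpoint RN bound is likewise the paper's Step~1.

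The gap is the meander. Your proposed route is circular: you compare the first half of $\P_n^\meander$ to $\P_{\lfloor n/2\rfloor}^\meander$, which is the \emph{same} object at half scale, not the free walk. A direct comparison of the first half with $\P_{n/2}$ fails because
\[
\frac{\P_n^\meander(\bV_{n/2}\in\dd\bz)}{\P_{n/2}(\bV_{n/2}\in\dd\bz)}
= \frac{\ind_{\{z_i>0\ \forall i\}}\,\P_{z_{n/2}}(S_1>0,\dots,S_{n/2}>0)}{\P(S_1>0,\dots,S_n>0)}
\]
is of order $\sqrt n$ when $z_{n/2}$ is large (the numerator can be close to $1$ while the denominator is $\asymp 1/\sqrt n$), and your ``time-reversed walk conditioned appropriately'' for the second half runs into the same positivity constraint when $S_n$ happens to be small. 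The paper handles $\P_n^\meander$ with two ingredients you do not invoke. First, $(|S_i|)_{i\le n}$ is a submartingale \emph{under $\P_n^\meander$}: writing $q_m(y)=\P_y(S_1>0,\dots,S_m>0)$, one has $\E_n^\meander[S_{i+1}-S_i\mid S_i=x]\ge 0$ by the Harris (FKG) inequality, since both $y\mapsto y-x$ and $y\mapsto\ind_{(0,\infty)}(y)\,q_{n-i-1}(y)$ are nondecreasing. Doob then reduces UI of $M_n^2/n$ to UI of $S_n^2/n$ \emph{under the meander law itself}. Second, UI of $S_n^2/n$ under $\P_n^\meander$ follows from weak convergence to the meander endpoint $V$ together with the first-moment convergence $\E_n^\meander[S_n^2/n]\to 2=\E[V^2]$, which the paper computes explicitly via a fluctuation-theory representation of the law of $S_n$ under $\P_n^\meander$. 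Neither step is achieved by absolute continuity with the unconditioned walk, and your proposal does not supply a substitute.
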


\smallskip

The proof of Theorem~\ref{th:main2}, given in Section~\ref{sec:prmain2},
comes in three steps. First we exploit \emph{local limit
theorems}, to remove the conditioning on $\{S_n=0\}$ and just
deal with $\P_n$, $\P_n^\meander$.
Then we use \emph{martingale arguments}, to get rid of the maximum
$M_n$ and focus on $S_n$.
Finally we use \emph{fluctuation theory},
to perform sharp computations on the law of $S_n$.

\begin{remark}
For a symmetric random walk,
the bound $M_{n}^2 \ge X_{n}^2 \, 
\ind_{\{S_{n-1} \ge 0, \, X_{n} \ge 0\}}$
gives
\begin{equation} \label{eq:eabo}
	\E \bigg[ \frac{M_{n}^2}{n} \, \ind_{\big\{
	\frac{M_{n}^2}{n} > K \big\}} \bigg]
	\,\ge\, \frac{1}{4} \, 
	\E\bigg[ \frac{X_{1}^2}{n} \, 
	\ind_{\big\{\frac{X_{1}^2}{n} > K \big\}} \bigg] \,.
\end{equation}
Given $n\in\N$, we can choose the law of $X_1$
so that the right hand side
vanishes as slow as we wish,  as $K \to \infty$.
Thus \eqref{eq:ui} cannot be improved, without further assumptions.
\end{remark}

\smallskip

We next introduce the laws of the random walk and bridge \emph{conditioned
to avoid zero}:
\begin{equation} \label{eq:laws2}
\begin{split}
	\P_n^\meandertwo(\,\cdot\,) & := \P\big((S_0, S_1, \ldots, S_n) \in \,\cdot\, 
	\,\big|\, S_1 \ne 0, S_2 \ne 0,
	\ldots, S_n \ne 0\big) \,, \\
	\P_n^\excursiontwo(\,\cdot\,) & := \P\big((S_0, S_1, \ldots, S_n) \in \,\cdot\, 
	\,\big|\, S_1 \ne 0, S_2 \ne 0,
	\ldots, S_{n-1} \ne 0, S_n = 0\big) \,.
\end{split}
\end{equation}
In the continuous case $\P(S_n \ne 0) = 1$, so we have trivially
$\P_n^\meandertwo = \P_n$
and $\P_n^\excursiontwo = \P_n^\bridge$.
In the discrete case, however, 
the conditioning on $\{S_n \ne 0\}$ has a substantial effect:
$\P_n^\meandertwo$ and $\P_n^\excursiontwo$
are close to ``two-sided versions'' of 
$\P_n^\meander$ and $\P_n^\excursion$
(see \cite{cf:Bel,cf:K}).

\smallskip

We prove the following analogue of Theorem~\ref{th:main2}.

\begin{theorem}\label{th:main3}
Let Assumption~\ref{ass:rw} hold.
Then $M_n^2/n$ under $\P_n^\excursiontwo$
or $\P_n^\meandertwo$ is uniformly integrable.
\end{theorem}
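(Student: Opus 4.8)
\textbf{Proof strategy for Theorem~\ref{th:main3}.}
The plan is to reduce the ``avoid zero'' laws to the ``stay positive'' laws already controlled by Theorem~\ref{th:main2}, using a symmetrization/decomposition argument. In the continuous case there is nothing to prove, since $\P_n^\meandertwo = \P_n$ and $\P_n^\excursiontwo = \P_n^\bridge$, both already covered by Theorem~\ref{th:main2}. So the whole point is the discrete case, where avoiding zero is a genuine constraint.

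First I would handle $\P_n^\meandertwo$. The key observation is that under the event $\{S_1 \ne 0, \ldots, S_n \ne 0\}$ the walk never changes sign after time $0$ in the sense of crossing zero, but in the discrete (integer-valued, aperiodic) setting it can of course jump over $0$; nevertheless, conditioning on $\{S_i \ne 0 \text{ for all } i \le n\}$ and then on the sign of $S_1$ (or more robustly, on $\mathrm{sgn}(S_n)$, or on the overall sign via a first-passage decomposition) expresses $\P_n^\meandertwo$ as a mixture, over the two signs, of laws that are ``one-sided'' but allowed to touch neither $0$; these are dominated (after reflection $S \mapsto -S$ for the negative branch) by objects of the same nature as $\P_n^\meander$ up to the mild difference between $\{S_i > 0\}$ and $\{S_i \ge 1\}$, which coincide for integer walks. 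Concretely, I would use the fact, recalled in the excerpt via \cite{cf:Bel,cf:K}, that $\P_n^\meandertwo$ is a ``two-sided version'' of $\P_n^\meander$: there is a representation of the avoid-zero path as a positive meander-like path with a random sign, so that $\E_{\P_n^\meandertwo}[\,\cdot\,] \le C\, \E_{\P_n^\meander}[\,\cdot\,]$ for a suitable constant on the relevant functionals, in particular for $g(M_n)$ with $g \ge 0$. Since $M_n$ is invariant under $S \mapsto -S$, the truncated second moment $\E[(M_n^2/n)\ind_{\{M_n^2/n > K\}}]$ under $\P_n^\meandertwo$ is then bounded by a constant times the corresponding quantity under $\P_n^\meander$, and uniform integrability follows from Theorem~\ref{th:main2}.

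For $\P_n^\excursiontwo$ I would run the same scheme with the extra endpoint constraint $\{S_n = 0\}$: condition on the sign of the excursion (the common sign of $S_1, \ldots, S_{n-1}$, up to over-jumps, handled again via a first-passage/last-passage decomposition), reflect the negative branch, and compare with $\P_n^\excursion$. The natural tool is the same local-limit-theorem bookkeeping used in the proof of Theorem~\ref{th:main2}: the normalizing probabilities $\P(S_1 \ne 0, \ldots, S_{n-1} \ne 0, S_n = 0)$ and $\P(S_1 > 0, \ldots, S_{n-1} > 0, S_n = 0)$ are each of order $n^{-3/2}$ (up to constants) by fluctuation theory plus the local limit theorem, so the Radon–Nikodym derivative $\dd \P_n^\excursiontwo / \dd \P_n^\excursion$ on the positive branch is bounded uniformly in $n$; then one invokes Theorem~\ref{th:main2} for $\P_n^\excursion$.

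I expect the main obstacle to be the discrete-case ``over-jump'' issue: unlike a continuous walk, an integer walk conditioned to avoid $0$ need not stay on one side of $0$, so the decomposition into a sign times a one-sided path is not literally a pathwise identity but requires a first-passage decomposition (cut the path at the last visit to the ``wrong'' side, or use the ladder-structure description of the avoid-zero walk from \cite{cf:Bel,cf:K}). Making the constant in the domination $\E_{\P_n^\meandertwo} \le C\,\E_{\P_n^\meander}$ uniform in $n$ is where the real work lies; once that is in place, everything reduces to Theorem~\ref{th:main2}. A secondary, purely bookkeeping, point is to check the continuous-case identities $\P_n^\meandertwo = \P_n$, $\P_n^\excursiontwo = \P_n^\bridge$ so that no separate argument is needed there.
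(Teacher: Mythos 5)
Your plan hinges on a domination of the form $\E_{\P_n^\meandertwo}[g(M_n)] \le C\, \E_{\P_n^\meander}[g(M_n)]$ (and similarly $\P_n^\excursiontwo$ vs.\ $\P_n^\excursion$), and you correctly flag that making this uniform in $n$ is ``where the real work lies''. In fact this is not just the hard part --- it is a genuine gap, and the obstacle is structural rather than technical. In the discrete case with over-jumps, the laws $\P_n^\meandertwo$ and $\P_n^\meander$ do not have comparable supports: a $\P_n^\meandertwo$-path can change sign any number of times by jumping over $0$, whereas $\P_n^\meander$ lives on strictly positive paths. So there is no Radon--Nikodym derivative $\dd\P_n^\meandertwo/\dd\P_n^\meander$, and ``reflecting the negative branch'' does not produce an injection into the support of $\P_n^\meander$, because a single avoid-zero path may contain several negative and positive stretches. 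The same issue affects your comparison of $\P_n^\excursiontwo$ with $\P_n^\excursion$: the RN bound you want to read off from the $n^{-3/2}$ asymptotics only makes sense on the subset of paths that never go negative, and says nothing about the over-jump paths. A first-passage/last-passage cut at sign changes, which you gesture at, would introduce a random number of pieces whose maxima you would then have to recombine --- and it is exactly this recombination that is uncontrolled.

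The paper sidesteps the problem by \emph{never} comparing an avoid-zero law to a stay-positive law. In Step~1 it compares $\P_n^\excursiontwo$ to $\P_n^\meandertwo$ (same support, both avoid $0$), with an RN bound on the first half of the path coming from the local limit theorems of Kesten and Uchiyama for the first hitting time $T$ of $0$ --- that is, it reduces $\P_n^\excursiontwo$ to $\P_n^\meandertwo$, not to $\P_n^\excursion$. In Step~2 it proves UI under $\P_n^\meandertwo$ directly, by contradiction and a path-decomposition idea that is absent from your proposal: under the \emph{unconditioned} law $\P_{2n}$, the portion of the path after the last zero $\sigma$ is, conditionally on $\sigma$, distributed as $\P_{2n-\sigma}^\meandertwo$. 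So if $M_n^2/n$ failed to be UI under $\P_n^\meandertwo$, one could propagate the failure (uniformly over a window of horizons $m \in \{n,\dots,2n\}$, via an estimate showing $\P_m^\meandertwo(M_n = z) \ge c\,\P_n^\meandertwo(M_n = z)$ for large $z$) to a failure of UI under $\P_{2n}$, contradicting Theorem~\ref{th:main2}. This is the key idea you are missing: the reference law for $\P_n^\meandertwo$ should be $\P_n$, accessed through the last-zero decomposition, rather than $\P_n^\meander$.
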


\smallskip

Theorem~\ref{th:main3} is proved in Section~\ref{sec:m3}.
We first use local limit theorems to
reduce the analysis to $\P_n^\meandertwo$, 
as for Theorem~\ref{th:main2}, but
we can no longer apply martingale techniques. 
We then exploit direct path arguments to \emph{deduce}
Theorem~\ref{th:main3} from Theorem~\ref{th:main2}.

\begin{remark}\label{rem:technical}
The laws $\P_n^\bridge$, $\P_n^\excursion$, $\P_n^\excursiontwo$ are well-defined
for $n \ge n_0$ --- since
$\P(S_n = 0) > 0$ or $f_n(0) > 0$,  see Assumption~\ref{ass:rw} ---
but not obviously for $n < n_0$. This is quite immaterial for our goals, since
uniform integrability is essentially an asymptotic property:
we can take any definition for these laws for $n < n_0$,
as long as we have $M_n \in L^2$.

We also stress that we require aperiodicity in Assumption~\ref{ass:rw}
only for notational convenience. If a discrete random walk has period $T \ge 2$, then
Theorems~\ref{th:main2} and~\ref{th:main3} still hold,
with essentially no change in the proofs,
but for the the laws $\P_n^\bridge$, $\P_n^\excursion$, $\P_n^\excursiontwo$
to be well-defined
we have to restrict $n \in T \N$, to ensure that $\P(S_n = 0) > 0$ for large $n$.
\end{remark}

\section{Tightness for pinning and wetting models}
\label{sec:meas}

We prove tightness under diffusive rescaling
for \emph{pinning and wetting models}, see Subsection~\ref{sec:pinning},
exploiting the property that these models 
have independent excursions\footnote{In this section
the word ``excursion'' has a more general meaning than in
Section~\ref{sec:rw}.}, conditionally on their
zero level set. It is simpler and more transparent to 
work with general probabilities which enjoy (a generalization of) this property,
that we now define.

\subsection{A sharp criterion for tightness based on excursions}

Given $t\in\N$, we use the
shorthands
\begin{equation*}
	[t] := \{0,1,\ldots,t\} \,, \qquad
	\R^{[t]} = \{x = (x_0, x_1, \ldots, x_t): \ x_i \in \R\}
	\simeq \R^{t+1} \,.
\end{equation*}
We consider probabilities $\bP_N$
on paths $x = (x_0, \ldots, x_N) \in \R^{[N]}$
which admit \emph{regeneration epochs in their zero level set}.
To define $\bP_N$, we need three ingredients:
\begin{itemize}
	\item the \emph{regeneration law} $p_N$ is a
	probability on the space of
	subsets of $[N]$ which contain $0$;

	\item the \emph{bulk excursion laws} $P_t^\bulk$, $t\in\N$, are probabilities
	on $\R^{[t]}$ with
	$P_t^\bulk(x_0 = x_t = 0) = 1$;

	\item the \emph{final excursion laws}
	$P_t^\rig$, $t\in\N$, are probabilities on $\R^{[t]}$ with
	$P_t^\rig(x_0 = 0) = 1$.

\end{itemize}

\begin{definition}\label{def:bPN}
The law
$\bP_N$ is the probability on $\R^{[N]}$ under which
the path $x = (x_0, x_1, \ldots, x_N)$ is built as follows.
\begin{enumerate}
\item First sample the number $n$
and the locations $0 =: t_1 < \ldots < t_n \le N$ of the \emph{regeneration epochs},
with probabilities
$p_N(\{t_1,\ldots, t_n\})$.

\item Then write the path $x$ as a concatenation of $n$ \emph{excursions}
$x^{(i)}$, with $i=1,\ldots, n$:
\begin{equation*}
	x^{(i)} := (x_{t_i}, \ldots, x_{t_{i+1}}) \,, \qquad
	\text{with \ $t_{n+1} := N$} \,.
\end{equation*}

\item Finally, given the regeneration epochs,
sample the excursions $x^{(i)}$ independently, with marginal laws
$P^\bulk_{t_{i+1}-t_i}$ for $i = 1, \ldots, n-1$
and (in case $t_n < N$) $P^\rig_{N-t_n}$ for $i=n$.
\end{enumerate}
\end{definition}

Let $C([0,1])$ be the space of continuous functions
$f:[0,1] \to \R$, with the topology of uniform convergence.
We define the
\emph{diffusive rescaling} operator
$\cR_N: \R^{[N]} \to C([0,1])$
\begin{equation*}
\begin{split}
	\cR_N(x) := & \ \Big\{ \text{linear interpolation of } 
	\ \tfrac{1}{\sqrt{N}} x_{Nt} \
	\text{ for } t \in \big\{0, \tfrac{1}{N}, \ldots, \tfrac{N-1}{N}, 1 \big\}
	\Big\}
\end{split}
\end{equation*}
We give optimal conditions under which the laws $\bP_N \circ \cR_N^{-1}$, called
\emph{diffusive rescalings} of $\bP_N$, are tight. Remarkably, we make \emph{no
assumption on the regeneration laws $p_N$}.

\smallskip

\begin{theorem}\label{th:main1}
Let $\bP_N$ be
as in Definition~\ref{def:bPN}.
The diffusive rescalings
$(\bP_N \circ \cR_N^{-1})_{N\in\N}$ are tight in $C([0,1])$,
for an arbitrary choice of the regeneration laws $(p_N)_{N\in\N}$,
if and only if the following conditions hold:
\begin{enumerate}
\item \label{it:1} the diffusive rescalings
$(P_t^\bulk \circ \cR_t^{-1})_{t\in\N}$ and $(P_t^\rig \circ \cR_t^{-1})_{t\in\N}$
are tight in $C([0,1])$;

\item \label{it:2} the bulk excursion law satisfies the following
integrability bound:
\begin{equation}\label{eq:keycond}
	\sup_{t\in\N} \, P_t^\bulk\bigg( \frac{\max_{0 \le i \le t} |x_i|}{\sqrt{t}}
	> a \bigg) = o\bigg(\frac{1}{a^2}\bigg) \qquad
	\text{as } a \uparrow \infty \,.
\end{equation}
\end{enumerate}
\end{theorem}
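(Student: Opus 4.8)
The plan is to prove both directions of the equivalence, treating the ``if'' direction (sufficiency) as the substantial one and the ``only if'' direction (necessity) as a matter of constructing bad regeneration laws.

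For the \textbf{sufficiency} direction, I would use the standard tightness criterion in $C([0,1])$: a family of laws of continuous processes $(Y^{(N)})$ with $Y^{(N)}_0 = 0$ is tight iff for every $\varepsilon > 0$ one has $\lim_{\delta \downarrow 0} \sup_N \bP_N(\omega_\delta(Y^{(N)}) > \varepsilon) = 0$, where $\omega_\delta$ is the modulus of continuity. Writing $Y^{(N)} = \cR_N(x)$ under $\bP_N$, the key point is to decompose the path into its excursions. Because the excursions are conditionally independent given the regeneration set $\{t_1, \ldots, t_n\}$, and because $\cR_N$ acts linearly, I would first reduce control of the global modulus of continuity to two ingredients: (a) control of the oscillation \emph{within} each excursion, and (b) control of the fact that the endpoints of excursions all sit at height $0$, so there is no ``accumulated drift'' across excursions --- the process returns to $0$ at every regeneration epoch, which is the crucial structural feature that makes the result hold with no assumption on $p_N$. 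Concretely, the oscillation of $\cR_N(x)$ over a time window $[s, s+\delta]$ is bounded by the sum of the full oscillations of those excursions that intersect $[Ns, N(s+\delta)]$ (each excursion starts and ends at $0$, so its maximum modulus bounds its contribution), \emph{plus} possibly the oscillation inside one or two boundary excursions that are only partially covered. The delicate case is a single \emph{long} excursion, of length comparable to $N$, which a window of width $\delta N$ may enter without covering; here one must use tightness of $P^\bulk_t$ and $P^\rig_t$ themselves (condition~(1)) to control the internal modulus of continuity, uniformly in $t$.

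The heart of the argument --- and the step I expect to be the main obstacle --- is handling \emph{many short excursions accumulating inside a small window}. If a window $[Ns, N(s+\delta)]$ contains a large number $k$ of short bulk excursions of lengths $\ell_1, \ldots, \ell_k$ with $\sum \ell_j \le \delta N$, then the oscillation of $\cR_N(x)$ over that window is at most $\frac{1}{\sqrt N}\sum_{j=1}^k Z_j$ where $Z_j := \max_i |x^{(j)}_i|$ is the (unrescaled) maximum of the $j$-th excursion, and the $Z_j$ are independent with $Z_j \sim P^\bulk_{\ell_j}$. This is a sum of independent nonnegative random variables, and I would control it via a truncation/Chebyshev argument: split each $Z_j$ at level $\eta \sqrt{N}$ for a threshold $\eta$ to be optimized. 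The large part $\sum_j Z_j \ind_{\{Z_j > \eta\sqrt N\}}$ is handled by a union bound, using \eqref{eq:keycond}: $\bP_N(\exists j: Z_j > \eta \sqrt N) \le \sum_j P^\bulk_{\ell_j}(Z_j > \eta\sqrt N) \le \sum_j \frac{\ell_j}{\eta^2 N}\,o(1) \le \frac{\delta}{\eta^2} o(1)$ --- here the point of the $o(1/a^2)$ bound (rather than just $O(1/a^2)$) is exactly that it beats the crude counting $\sum_j \ell_j \le \delta N$, and this is why \eqref{eq:keycond} is the sharp hypothesis. The small (truncated) part has first moment $\E\big[\sum_j Z_j \ind_{\{Z_j \le \eta\sqrt N\}}\big] = \sum_j \E_{P^\bulk_{\ell_j}}[Z_j \ind_{\{Z_j \le \eta\sqrt N\}}]$, and again by \eqref{eq:keycond} one has $\E_{P^\bulk_t}[(Z/\sqrt t\,)^2 \ind_{\{Z/\sqrt t \le c\}}] \le \psi(c)$ for a function $\psi$ with $\psi(c) = o(\log c)$ or at least $\psi$ slowly varying --- more carefully, integrating the tail $\int_0^{c} a\,\sup_t P^\bulk_t(Z/\sqrt t > a)\,\dd a$, which gives an $o(1)$-type gain; summing over $j$ produces a bound of order $\sqrt{\delta N} \cdot o(1)/\sqrt N \cdot (\ldots)$, and a Cauchy--Schwarz step $\sum_j \sqrt{\ell_j} \le \sqrt{k}\sqrt{\sum \ell_j}$ combined with the crude bound $k \le \delta N$ closes it, provided one is slightly more quantitative. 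I would organize this truncation carefully as a lemma (``a sum of independent excursion-maxima over a window of relative width $\delta$ is, after diffusive rescaling, small with high probability, uniformly in $N$ and in the regeneration configuration''), since getting the two-parameter optimization between the truncation level $\eta$ and the window width $\delta$ right is exactly where the sharpness lives.

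For the \textbf{necessity} direction, I would argue by contraposition: if condition~(1) fails, then already taking $p_N$ to put a single excursion of full length $N$ (i.e.\ $n = 1$, $t_1 = 0$, so $\bP_N = P^\rig_N$, or a variant landing on $P^\bulk_N$) shows $\bP_N \circ \cR_N^{-1}$ is not tight; and if condition~(2) fails, there is $a_0 > 0$ and a sequence $t_k \uparrow \infty$ with $P^\bulk_{t_k}(\max_i |x_i| > a_0 \sqrt{t_k}) \ge c/a_0^2$ along a subsequence where the $o(1/a^2)$ is violated for a fixed $a$ --- more precisely, failure of \eqref{eq:keycond} means $\limsup_{a\to\infty} a^2 \sup_t P^\bulk_t(\ldots > a) > 0$. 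Then I would build $\bP_N$ (for suitable $N = N(k)$) by tiling $[0,N]$ with $\approx N/t_k$ i.i.d.\ copies of the bad bulk excursion of length $t_k$; the maximum of $\cR_N(x)$ is then $\ge \frac{a\sqrt{t_k}}{\sqrt N}$ with probability $\ge 1 - (1 - \frac{c}{a^2 t_k}\cdot t_k / N \cdot N)\ldots$ --- i.e.\ a Borel--Cantelli/independence computation shows the rescaled maximum fails to be tight (it has mass bounded below at scale independent of $N$ that does not decay, because each of the $N/t_k$ independent excursions has a chance $\gtrsim a^{-2} t_k / t_k$... ) giving $\sup_N \bP_N(\|\cR_N(x)\|_\infty > A) \not\to 0$ as $A \to \infty$. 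I would choose the number of tiles to balance things so that the rescaled max is of order $1$ with non-vanishing probability while $A \to \infty$. This direction is conceptually routine once the right counterexample is identified; the only care needed is matching lengths so that $t_k / N$ is summable-but-not-too-small. Overall, the real work is the sufficiency proof, and within it the truncation lemma controlling accumulated short excursions is the crux.
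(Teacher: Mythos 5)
Your sufficiency argument has a gap at its very first reduction. You bound the oscillation of $\cR_N(x)$ over a window $[s,s+\delta]$ by the \emph{sum} of the maxima $Z_j$ of those excursions intersecting $[Ns,N(s+\delta)]$. This bound is valid but far too lossy, and the subsequent truncation does not actually close. Concretely, take $k$ bulk excursions of equal length $m$ with $km = \delta N$: each $Z_j$ is typically of order $\sqrt m$, so $\tfrac1{\sqrt N}\sum_j Z_j$ is of order $k\sqrt m / \sqrt N = \delta\sqrt{N}/\sqrt m$, which diverges as $N\to\infty$ for any fixed $m,\delta$. Your Cauchy--Schwarz bound $\sum_j\sqrt{\ell_j}\le\sqrt{k}\sqrt{\sum\ell_j}$ with $k\le\delta N$ gives $\sum_j\sqrt{\ell_j}\le\delta N$, hence $\tfrac1{\sqrt N}\sum_j \E[Z_j\ind\ldots]\lesssim\delta\sqrt N$, again divergent; the $o(1)$ gain from \eqref{eq:keycond} cannot overcome this, because it is not quantified and can vanish arbitrarily slowly. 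The point you are missing is the structural observation the paper's proof is built on: since the rescaled path $f$ vanishes at every regeneration epoch, for $s<t$ with $|t-s|\le\delta$ one writes $|f_t-f_s|\le |f_s-f_{u^-}|+|f_t-f_{u^+}|$ where $u^-,u^+$ are the nearest regeneration points in $[s,t]$, giving $\Gamma(\delta)(f)\le 2\tilde\Gamma_U(\delta)(f)$ where $\tilde\Gamma_U$ only compares points within a single excursion. This replaces your sum of excursion maxima by (twice) a max, which is what \eqref{eq:keycond} together with a union bound over excursions can control: the correct mechanism is the product bound $\prod_i \big(1 - \tfrac{4(t_{i+1}-t_i)}{\eta^2 N}\epsilon(\cdot)\big)\ge\exp\big(-\tfrac{8}{\eta^2}\epsilon(\cdot)\big)$, using $\sum_i(t_{i+1}-t_i)\le N$, with the $o(1)$ function $\epsilon$ appearing \emph{multiplicatively} in the exponent (so it drives the product to $1$), not additively in a truncated first moment.

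Your necessity argument for condition~\eqref{it:2} also does not go through as stated: you construct a tiled measure whose rescaled maximum is ``of order 1 with non-vanishing probability,'' and then claim this means $\sup_N\bP_N(\|\cR_N(x)\|_\infty>A)\not\to 0$ as $A\to\infty$. But those are incompatible: a rescaled maximum staying at order $1$ is perfectly consistent with tightness (as for Brownian bridge). The paper's argument is subtler and you should adopt it: with $N_n=t_n a_n^2$ tiles of length $t_n$, the regeneration points $\{0,\tfrac1{a_n^2},\ldots,1\}$ become dense in $[0,1]$; hence if the rescalings were tight, every subsequential weak limit would be the zero path, yet $\bP_{N_n}(\sup_t|f_t|>1)\ge 1-(1-\eta/a_n^2)^{a_n^2}\to 1-e^{-\eta}>0$, a contradiction with weak convergence to the zero path. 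Your necessity argument for condition~\eqref{it:1} (concentrate $p_N$ on $\{0\}$ or $\{0,N\}$) is correct and is what the paper does.
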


\noindent
We point out that a slightly weaker version of Theorem~\ref{th:main1} was proved
in \cite{cf:CGZ?}.

\smallskip

To make a link with the previous section, we set
$M_t := \max_{0 \le i \le t} |x_i|$ and observe that
\begin{equation*}
	P_t^\bulk\bigg( \frac{\max_{0 \le i \le t} |x_i|}{\sqrt{t}}
	> a \bigg) \le 
	\frac{1}{a^2} \, E_t^\bulk \bigg[ \displaystyle \frac{M_t^2}{t}
	\, \ind_{\big\{\frac{M_t^2}{t} > a\big\}} \bigg] \,.
\end{equation*}
Thus condition~\eqref{it:2} in Theorem~\ref{th:main1} is satisfied
\emph{if $M_t^2/t$ is uniformly integrable under $P_t^\bulk$}.
We then obtain the following corollary of Theorems~\ref{th:main2} and~\ref{th:main3}.

\begin{proposition}\label{th:cor}
Condition~\eqref{it:2} in Theorem~\ref{th:main1} is satisfied
if $P_t^\bulk$ is chosen among $\{\P_t^\bridge, \P_t^\excursion,
\P_t^\excursiontwo\}$,
see \eqref{eq:laws} and \eqref{eq:laws2},
for a random walk satisfying Assumption~\ref{ass:rw}.
\end{proposition}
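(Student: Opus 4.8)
The plan is to deduce this directly from Theorems~\ref{th:main2} and~\ref{th:main3} via the elementary Chebyshev-type inequality already recorded before the statement. Fix $a \ge 1$ and write $M_t := \max_{0\le i\le t}|x_i|$ as in the text. Since $\{M_t/\sqrt{t} > a\} = \{M_t^2/t > a^2\} \subseteq \{M_t^2/t > a\}$ and the ratio $(M_t^2/t)/a^2$ is $>1$ on this event, taking expectations under $P_t^\bulk$ gives
\begin{equation*}
	P_t^\bulk\bigg( \frac{M_t}{\sqrt{t}} > a \bigg)
	\,\le\, \frac{1}{a^2}\, E_t^\bulk\bigg[ \frac{M_t^2}{t}\,
	\ind_{\{M_t^2/t > a\}} \bigg] \,.
\end{equation*}
Hence $a^2 \sup_{t\in\N} P_t^\bulk(M_t/\sqrt{t} > a) \le \sup_{t\in\N} E_t^\bulk[(M_t^2/t)\,\ind_{\{M_t^2/t > a\}}]$, so it suffices to prove that $M_t^2/t$ is uniformly integrable under $P_t^\bulk$: the right-hand side then tends to $0$ as $a\uparrow\infty$, which is exactly the condition~\eqref{eq:keycond}.

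It remains to check this uniform integrability for each of the three admissible choices. Observe first that $\P_t^\bridge$, $\P_t^\excursion$ and $\P_t^\excursiontwo$ all satisfy $x_0 = x_t = 0$ almost surely, hence are legitimate bulk excursion laws in the sense of Definition~\ref{def:bPN} (this is why $\P_t^\meander$ and $\P_t^\meandertwo$, which only fix $x_0 = 0$, do not appear here). For $t \ge n_0$ these laws are well defined by Assumption~\ref{ass:rw}, and the uniform integrability of $M_t^2/t$ is precisely the content of Theorem~\ref{th:main2} when $P_t^\bulk \in \{\P_t^\bridge, \P_t^\excursion\}$, and of Theorem~\ref{th:main3} when $P_t^\bulk = \P_t^\excursiontwo$.

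For the finitely many indices $t < n_0$, where $P_t^\bulk$ may be defined arbitrarily subject only to $M_t \in L^2$ (Remark~\ref{rem:technical}), each term $E_t^\bulk[(M_t^2/t)\,\ind_{\{M_t^2/t > a\}}]$ tends to $0$ as $a\uparrow\infty$ by dominated convergence, so these terms do not affect the supremum over all $t\in\N$. Putting the pieces together yields~\eqref{eq:keycond}. I do not expect any genuine obstacle here: the entire difficulty lies in Theorems~\ref{th:main2} and~\ref{th:main3}, and the present statement is a bookkeeping corollary that merely rephrases ``uniform integrability of $M_t^2/t$'' as the tail bound~\eqref{eq:keycond}.
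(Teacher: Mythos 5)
Your proposal is correct and follows exactly the route the paper itself takes: the Chebyshev-type bound $P_t^\bulk(M_t/\sqrt{t} > a) \le a^{-2}\,E_t^\bulk[(M_t^2/t)\,\ind_{\{M_t^2/t>a\}}]$ is already displayed in the paper immediately before the statement, and the proposition is then read off as a direct consequence of the uniform integrability asserted in Theorems~\ref{th:main2} and~\ref{th:main3}. Your extra remarks (the $a\ge 1$ restriction, the $x_0=x_t=0$ check that makes these legitimate bulk excursion laws, and the finitely many $t<n_0$) are harmless bookkeeping that the paper delegates to Remark~\ref{rem:technical}.
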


\begin{remark}\label{rem:n0}
Condition~\eqref{it:1} in Theorem~\ref{th:main1} is satisfied too,
if $P_t^\bulk$ is chosen among $\{\P_t^\bridge, \P_t^\excursion, \P_t^\excursiontwo\}$
and $P_t^\rig$
is chosen among $\{\P_n, \P_n^\meander, \P_n^\meandertwo \}$,
under Assumption~\ref{ass:rw}.
Indeed, the diffusive rescalings of
$\P_n$, $\P_n^\bridge$, $\P_n^\meander$ and $\P_n^\excursion$
converge weakly to Brownian motion \cite{cf:Donsker}, bridge
\cite{cf:Lig,cf:DGZ}, meander \cite{cf:B} and excursion \cite{cf:CarCha2};
in the discrete case,  the diffusive rescalings
of $\P_n^\meandertwo$ and $\P_n^\excursiontwo$ converge weakly
to two-sided Brownian meander \cite{cf:Bel} and excursion \cite{cf:K}.
\end{remark}

\smallskip
\subsection{Pinning and wetting models}
\label{sec:pinning}

An important class of laws $\bP_N$ 
to which Theorem~\ref{th:main1} applies is
given by pinning and wetting models (see \cite{cf:Gia,cf:Gia2,cf:dH} 
for background).

\smallskip

Fix a random walk $(S_n)_{n\in\N_0}$ as in
Assumption~\ref{ass:rw}
and a real sequence $\xi = (\xi_n)_{n\in\N}$
(\emph{environment}).
For $N\in\N$, the \emph{pinning model} $\bP_N^{\xi}$
is the law on $\R^{[N]}$ defined as follows.
\begin{itemize}
\item \emph{Discrete case}. We define
\begin{equation*}
	\frac{\bP_N^\xi\big( (S_0, \ldots, S_N) = (s_0, \ldots, s_N) \big)}
	{\P\big( (S_1, \ldots, S_N) = (s_1, \ldots, s_N) \big)} := 
	\frac{e^{\sum_{n=1}^N \xi_n \, \ind_{\{s_n=0\}}}}{Z_N^\xi} \,,
\end{equation*}
where $Z_N^\xi$ is a suitable normalizing constant, called \emph{partition function}.

\smallskip
\item \emph{Continuous case}.
We assume that $\xi_n \ge 0$ for all $n\in\N$ and we define $\bP_N^\xi$ by
\begin{equation*}
	\bP_N^\xi\big( (S_0, \ldots, S_n) \in (\dd s_0, \ldots, \dd s_n) \big) := 
	\delta_0(\dd s_0) \, \frac{\prod_{n=1}^N \big(
	f(s_n - s_{n-1}) \, \dd s_n \,+\, \xi_n \, \delta_0(\dd s_n) \big)}{Z_N^\xi} \,,
\end{equation*}
where $f(\cdot)$ is the density of $S_1$
and $\delta_0(\cdot)$ is the Dirac mass at $0$.
\end{itemize}
Note that $\bP_N^\xi$ fits Definition~\ref{def:bPN} with
regeneration epochs $\{k \in [N]: \ s_k = 0\}$
(the whole zero level set)
and $P_t^\bulk = \P_t^\excursiontwo$, $P_t^\rig = \P_t^\meandertwo$
(which means $P_t^\bulk = \P_t^\bridge$, $P_t^\rig = \P_t$ in the continuous case).

\smallskip

Another example of law $\bP_N$
as in Definition~\ref{def:bPN} is the
\emph{wetting model} $\bP_N^{\xi,+}$, defined by
\begin{equation*}
	\bP_N^{\xi,+}(\,\cdot\,) := \bP_N^{\xi}(\,\cdot\,|\,s_1 \ge 0 , \, s_2 \ge 0, \,
	\ldots, \, s_N \ge 0 \,) \,.
\end{equation*}
The bulk excursion
law is now $P_t^\bulk = \P_t^\excursion$,
while the final excursion law is
$P_t^\rig = \P_t^\meander$.

\smallskip

Finally, \emph{constrained} versions
of the pinning and wetting models
also fit Definition~\ref{def:bPN}:
\begin{equation*}
	\bP_N^{\xi,c}(\,\cdot\,) := \bP_N^{\xi}(\,\cdot\,|\, s_N = 0) \,, \qquad
	\bP_N^{\xi,+,c}(\,\cdot\,) := \bP_N^{\xi,+}(\,\cdot\,|\, s_N = 0) \,.
\end{equation*}
The final and bulk excursion laws coincide ($P_t^\rig = \P_t^\excursiontwo$
for $\bP_N^{\xi,c}$, $P_t^\rig = \P_t^\excursion$
for $\bP_N^{\xi,+,c}$).

\smallskip

Proposition~\ref{th:cor}
and Remark~\ref{rem:n0} yield immediately the following result.

\smallskip

\begin{theorem}[Tightness for pinning and wetting models]\label{th:ti}
Fix a real sequence $\xi = (\xi_n)_{n\in \N}$.
Under Assumption~\ref{ass:rw}, 
the diffusive rescalings $(\sfP_N \circ \cR_N^{-1})_{N\in\N}$
of pinning or wetting models
 $\sfP_N \in
\{ \bP_N^\xi, \, \bP_N^{\xi,+}, \, \bP_N^{\xi,c} , \, \bP_N^{\xi,+,c}\}$ 
are tight in $C([0,1])$.
\end{theorem}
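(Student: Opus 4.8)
The plan is to deduce Theorem~\ref{th:ti} directly from Theorem~\ref{th:main1}, Proposition~\ref{th:cor} and Remark~\ref{rem:n0}. First I would record, for each of the four models $\sfP_N \in \{\bP_N^\xi, \bP_N^{\xi,+}, \bP_N^{\xi,c}, \bP_N^{\xi,+,c}\}$, that it is an instance of the abstract law $\bP_N$ of Definition~\ref{def:bPN}: the regeneration epochs are the zero level set $\{k \in [N]: s_k = 0\}$, and the bulk excursion law $P_t^\bulk$ and final excursion law $P_t^\rig$ are the ones listed in Subsection~\ref{sec:pinning}. Concretely, $P_t^\bulk \in \{\P_t^\excursion, \P_t^\excursiontwo\}$ in all four cases (with $\P_t^\excursiontwo = \P_t^\bridge$ in the continuous case), while $P_t^\rig \in \{\P_t^\meander, \P_t^\meandertwo\}$ for the unconstrained models $\bP_N^\xi, \bP_N^{\xi,+}$ and $P_t^\rig \in \{\P_t^\excursion, \P_t^\excursiontwo\}$ for the constrained ones $\bP_N^{\xi,c}, \bP_N^{\xi,+,c}$, since conditioning on $\{s_N = 0\}$ turns the final excursion into a bulk-type one. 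Verifying these identifications is just a matter of unwinding the definitions; the only mild subtlety is keeping track of the discrete/continuous dichotomy, where $\P_t^\excursiontwo$ and $\P_t^\meandertwo$ collapse to $\P_t^\bridge$ and $\P_t$.

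Having done this, Theorem~\ref{th:main1} reduces tightness of $(\sfP_N \circ \cR_N^{-1})_{N \in \N}$ to checking its two conditions for the relevant $P_t^\bulk$ and $P_t^\rig$. Condition~\eqref{it:2} is exactly the content of Proposition~\ref{th:cor}, since in every case $P_t^\bulk$ belongs to $\{\P_t^\bridge, \P_t^\excursion, \P_t^\excursiontwo\}$. Condition~\eqref{it:1} --- tightness of the diffusive rescalings of $P_t^\bulk$ and $P_t^\rig$ --- is covered by Remark~\ref{rem:n0}: each of the laws $\P_t^\bridge, \P_t^\excursion, \P_t^\excursiontwo, \P_t^\meander, \P_t^\meandertwo$ (and $\P_t$) has diffusive rescaling converging weakly to a limiting process (Brownian bridge, excursion, two-sided excursion, meander, two-sided meander, Brownian motion), and weak convergence in $C([0,1])$ entails tightness in particular.

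Combining the two points, Theorem~\ref{th:main1} yields tightness of $(\sfP_N \circ \cR_N^{-1})_{N \in \N}$ for each of the four models, which is the claim. I do not expect a genuine obstacle here: all the analytic content sits in Theorems~\ref{th:main2}, \ref{th:main3} and~\ref{th:main1}, and the present statement merely packages them. The only place requiring care is the bookkeeping of which excursion laws appear for which model, and checking that the $P_t^\rig$ that occur are always among those for which an invariance principle is available --- which is exactly why Remark~\ref{rem:n0} restricts $P_t^\rig$ to $\{\P_t, \P_t^\meander, \P_t^\meandertwo\}$ for the unconstrained models and to the bulk family for the constrained ones.
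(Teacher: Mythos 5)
Your proposal matches the paper's own argument: the paper explicitly states that Proposition~\ref{th:cor} and Remark~\ref{rem:n0} "yield immediately" Theorem~\ref{th:ti}, and you carry out precisely that deduction by identifying $P_t^\bulk$ and $P_t^\rig$ for each of the four models and feeding them into Theorem~\ref{th:main1}. Your observation that for the constrained models $P_t^\rig$ lies in the bulk family $\{\P_t^\excursion, \P_t^\excursiontwo\}$ rather than the set literally quoted in Remark~\ref{rem:n0} is a correct and useful clarification, and is covered by the invariance principles listed in the body of that remark.
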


\smallskip

This result fills a gap in the proof of 
\cite[Lemma~4]{cf:DGZ}, which was also used in the works
\cite{cf:CGZ2}, \cite{cf:CGZ1}.
A recent application of Theorem~\ref{th:ti} can be found in 
\cite{cf:DO}.

\smallskip

Pinning and wetting models are challenging models, which
display a rich behavior.
This complexity is hidden in the regeneration law $p_N = p_N^\xi$.
This explains
the importance of having criteria for tightness, such as Theorem~\ref{th:main1},
which only looks at excursions.

\smallskip

\begin{remark}
There are models where regeneration epochs
are a strict subset of the zero level set.
For instance, in presence of a Laplacian interaction \cite{cf:BC,cf:CD1,cf:CD2}, 
couples of adjacent zeros are regeneration epochs.
Theorem~\ref{th:main1} can cover these cases.
\end{remark}

\section{Proof of Theorem~\ref{th:main2}}
\label{sec:prmain2}

We fix a random walk $(S_n)_{n\in\N_0}$ which satisfies Assumption~\ref{ass:rw},
for simplicity with $\sigma^2 = 1$.
We split the proof of Theorem~\ref{th:main2} in three steps.
To prove \eqref{eq:ui} we may take $n \ge n_0$, with $n_0$ as in
Assumption~\ref{ass:rw}, because \eqref{eq:ui} holds for any fixed $n$,
by $M_n \in L^2$.

\medskip

\noindent
\textbf{Step 1.}
We use the shorthand UI for ``uniformly integrable''.
In this step assume that
\begin{equation}\label{eq:goal1a}
	\tfrac{M_n^2}{n} \text{ under $\P_n$ 
	(resp.\ under $\P^\meander$) is UI} \,, 
\end{equation}
and we show that
\begin{equation}\label{eq:goal1b}
	\tfrac{M_n^2}{n} \text{ under $\P_n^\bridge$
	(resp.\ under $\P_n^\excursion$)  is UI} \,.
\end{equation}

Let us set
$M_{[a,b]} := \max_{a \le i \le b} |S_i|$. Since $M_n \le 
	\max\{M_{[0,n/2]}, M_{[n/2, n]}\}$,
it suffices to prove that $M_{[0,n/2]}^2/n$ and $M_{[n/2, n]}^2/n$
are UI. By symmetry, 
\eqref{eq:goal1b} is equivalent to
\begin{equation}\label{eq:goal1bis}
	\tfrac{M_{n/2}^2}{n} \text{ under $\P_n^\bridge$
	(resp.\ under $\P_n^\excursion$)  is UI} \,.
\end{equation}

We take $n$ even (for simplicity). 
We show that the laws of $\bV_{n/2} := (S_1, \ldots, S_{n/2})$
under $\P_n^\bridge$ (resp.\ $\P_n^\excursion$)
and under $\P_n$ (resp.\ $\P_n^\meander$)
\emph{have a bounded Radon-Nikodym density}:
\begin{equation}\label{eq:RNbound}
	\sup_{n \ge n_0}
	\, \sup_{\bz \in \R^{n/2}} \frac{\P_n^\bridge( \bV_{n/2} \in \dd \bz )}
	{\P_n( \bV_{n/2} \in \dd \bz )}
	< \infty \quad\
	\Bigg( \text{resp.} \  \sup_{n\ge n_0}
	\, \sup_{\bz \in \R^{n/2}} \frac{\P_n^\excursion( \bV_{n/2} \in \dd \bz )}
	{\P_n^\meander( \bV_{n/2} \in \dd \bz )}
	< \infty \Bigg) \,.
\end{equation}
Since $M_{n/2}$ is a function of $\bV_{n/2}$,
it follows that \eqref{eq:goal1a} implies \eqref{eq:goal1bis}
(note that $M_{n/2} \le M_n$).

\smallskip

It remains to prove \eqref{eq:RNbound}. 
By Gnedenko's local limit theorem, in the discrete case
\begin{equation} \label{eq:llteffdisc}
	\forall n \ge n_0: \qquad
	\P(S_n = 0) \ge \tfrac{c}{\sqrt{n}} \,, \qquad
	\sup_{x\in\Z} \, \P(S_n = x) \le \tfrac{C}{\sqrt{n}} \,,
\end{equation}
hence
\begin{equation*} 
	\frac{\P_n^\bridge( \bV_{n/2} = \bz )}{\P_n( \bV_{n/2} = \bz )}
	 = \frac{\P(S_{n/2} = -z_{n/2})}{\P(S_n = 0)}
	\le \frac{C}{c} < \infty \,,
\end{equation*}
which proves the first relation in \eqref{eq:RNbound} in the discrete case.
The continuous case is similar, since
$f_n(0) \ge \frac{c}{\sqrt{n}}$ and
$\sup_{x\in\R} \, f_n(x) \le \frac{C}{\sqrt{n}}$ for $n\ge n_0$,
under Assumption~\ref{ass:rw}.

To prove the second relation in \eqref{eq:RNbound}, in the discrete case we compute 
\begin{equation*}
	\frac{\P_n^\excursion( \bV_{n/2} = \bz )}{\P_n^\meander( \bV_{n/2} = \bz )}
	 = \frac{\P_0(S_1 > 0, \ldots, S_n > 0) \, \P_{z_{n/2}}(S_1 > 0, \ldots,
	 S_{n/2-1} > 0, S_{n/2} = 0)}{\P_0(S_1 > 0, \ldots,
	 S_{n-1} >0 , S_n = 0) \, \P_{z_{n/2}}(S_1 > 0, \ldots,
	 S_{n/2} > 0)} \,,
\end{equation*}
where $\P_{x}$ is the law of the random walk started at $S_0 = x$.
For some $c_1 < \infty$ we have
\begin{equation*}
	\P_0(S_1 > 0, \ldots, S_n > 0) \le \tfrac{c_1}{\sqrt{n}} \,,
\end{equation*}
by \cite[Th.1 in \S XII.7, Th.1 in \S XVIII.5]{cf:Feller2}.
Next we apply \cite[eq. (4.5) in Prop.~4.1]{cf:CarCha2}
(with $a_n = \sqrt{n} \, (1+o(1))$),
which summarizes \cite{cf:AD,cf:VV}:
for some $c_2 \in (0,\infty)$
\begin{equation*}
	\P_0(S_1 > 0, \ldots, S_{n-1} > 0, S_{n} = 0)
	\ge \tfrac{c_2}{n^{3/2}} \,.
\end{equation*}
As a consequence, if we rename $n/2$ as $n$
and $z_{n/2}$ as $x$, it remains to show that
\begin{equation}\label{eq:itri}
	\sup_{n\ge n_0} \,
	\sup_{x \ge 0} \, \frac{n \, \P_x(S_1 > 0, \ldots, S_{n-1} > 0, S_n = 0)}{\P_x(S_1 > 0,
	\ldots, S_n > 0)} < \infty \,.
\end{equation}

By contradiction, if \eqref{eq:itri} does not hold,
there are subsequences $n=n_k \in \N$,
$x=x_k \ge 0$, for $k\in\N$, such that \emph{the ratio in \eqref{eq:itri}
diverges as $k\to\infty$}. We distinguish two cases: either
$\liminf_{k\to\infty} x_k / \sqrt{n_k} = \eta > 0$ (case 1), or
$\liminf_{k\to\infty} x_k / \sqrt{n_k} = 0$, i.e.\
there is a subsequence $k_\ell$ with
$x_{k_\ell} = o(\sqrt{n_{k_\ell}})$ (case 2).

In case 1, i.e. for $x \ge \eta \sqrt{n}$,
the denominator in \eqref{eq:itri} is
bounded away from zero:
\begin{equation*}
	\P_x(S_1 > 0,
	\ldots, S_n > 0)
	\ge \P_{\lfloor \eta \sqrt{n} \rfloor}(S_1 > 0,
	\ldots, S_n > 0)
	\xrightarrow[n\to\infty]{} \P_\eta (B_t > 0 \ \forall t \in [0,1]) > 0 \,,
\end{equation*}
by Donsker's invariance principle \cite{cf:Donsker}
($B = (B_t)_{t\ge 0}$ is Brownian motion started at $\eta$).
For the numerator, 
by \cite[eq. (4.4) in Prop.~4.1]{cf:CarCha2}
which summarizes \cite{cf:Car05,cf:VV},
\begin{gather*}
	\P_{x}(S_1 > 0, \ldots, S_{n-1} > 0, S_{n} = 0)
	\le \tfrac{c_3}{\sqrt{n}} \,
	\P(S_1 < 0, \ldots,
	S_n < 0) \le \tfrac{c_3'}{n} \,,
\end{gather*}
for suitable $c_3, c_3' \in (0,\infty)$. 
Then the ratio in \eqref{eq:itri} is bounded,
which is a contradiction.

In case 2, i.e.\ for  $x = o(\sqrt{n})$,
by \cite[eq. (4.5) in Prop.~4.1]{cf:CarCha2}
we have
\begin{equation} \label{eq:sure}
	\P_{x}(S_1 > 0, \ldots, S_{n-1} > 0, S_{n} = 0)
	\,\underset{n\to\infty}{\sim}\, \underline{V}^-(x) \, \tfrac{c_4}{n^{3/2}} \,,
\end{equation}
for a suitable $\underline{V}^-(x)$.
Since $\{S_1 > 0, \ldots, S_{n} > 0\} 
= \bigcup_{m > n} \{S_1 > 0, \ldots, S_{m-1} > 0, S_{m} = 0\}$ a.s.\ (note that the random walk
is recurrent), we get
\begin{equation*}
	\P_{x}(S_1 > 0, \ldots, S_{n-1} > 0, S_{n} > 0)
	\,\underset{n\to\infty}{\sim}\, \underline{V}^-(x) \, \tfrac{2 \, c_4}{\sqrt{n}} \,,
\end{equation*}
see also \cite[Cor.~3]{cf:Don12}.
Thus the ratio in \eqref{eq:itri} is bounded, which is the desired
contradiction.

This completes the proof of the second relation in \eqref{eq:RNbound} in the discrete case.
The continuous case is dealt with with identical arguments, exploiting
\cite[Th.~5.1]{cf:CarCha2}. \qed

\medskip
\noindent
\textbf{Step 2.}
In this step we assume that
\begin{equation}\label{eq:goal3}
	\tfrac{S_n^2}{n} \text{ under $\P_n$ (resp.\ under $\P_n^\meander$)
	is UI} \,,
\end{equation}
and we deduce that
\begin{equation}\label{eq:goal2}
	\tfrac{M_n^2}{n} \text{ under $\P_n$ (resp.\ under $\P_n^\meander$)
	is UI} \,.
\end{equation}

Observe that $(|S_i|)_{0 \le i \le n}$ is a submartingale
under $\P_n$.
Let us show that $(|S_i|)_{0 \le i \le n}$ is a submartingale also under $\P_n^\meander$
(for every fixed $n\in\N$).
We set for $m\in\N$ and $x \in \R$
\begin{equation*}
	q_m(x) := \P(x + S_1 > 0, x + S_2 > 0, \ldots, x + S_m > 0) \,,
\end{equation*}
with $q_0(x) := 1$. Then
we can write, for any $n \in\N$, $i \in \{0,1,\ldots, n-1\}$ and $x \ge 0$,
\begin{equation*}
	\P_n^\meander \big[ S_{i+1} \in \dd y \,\big|\, S_i = x \big] = 
	\frac{\ind_{(0,\infty)}(y) \, q_{n-(i+1)}(y)}{q_{n-i}(x)} \, \P(X_1 \in \dd y - x) \, .
\end{equation*}
Since $y \mapsto (y-x)$ and $y \mapsto \ind_{(0,\infty)}(y) \, q_{n-(i+1)}(y)$ 
are non-decreasing functions, it follows by the Harris inequality
(a special case of the FKG inequality) and $\E[X_1] = 0$ that
\begin{equation*}
\begin{split}
	\E_n^\meander \big[ S_{i+1} - S_i \,\big|\, S_i = x \big] 
	& \,\ge\, \int_\R (y-x)\, \P(X_1 \in \dd y - x) \cdot
	\int_0^\infty
	\frac{q_{n-(i+1)}(y)}{q_{n-i}(x)} \, \P(X_1 \in \dd y - x)
	 \,=\,  0 \,.
\end{split}
\end{equation*}

Since $(|S_i|)_{0 \le i \le n}$ is a submartingale,
also
$(Z_i := (|S_i| - K)^+)_{0 \le i \le n}$ is a submartingale,
for any $K \in (0,\infty)$.
Doob's $L^2$ inequality yields,
for $\sfP_n = \P_n$ or $\sfP_n = \P_n^\meander$
(recall \eqref{eq:Mn}),
\begin{equation*}
	\sfE_n \big[ (M_n - K)^2 \, \ind_{\{M_n > K\}} \big]
	= \sfE_n\Big[ \Big(\max_{0 \le i \le n} Z_i \Big)^2 \Big]
	\,\le\, 4 \, \sfE_n\big[ Z_n^2 \big]  
	= 4 \, \sfE_n \big[ (S_n - K)^2 \, \ind_{\{S_n > K\}} \big] \,.
\end{equation*}
For $M_n > 2K$ we can bound 
$M_n^2 \le 4(M_n - K)^2$. Since $(S_n - K)^2 \le S_n^2$ for $S_n > K$, we get
\begin{equation*}
	\sfE_n \big[ M_n^2 \, \ind_{\{M_n > 2K\}} \big]
	\le 16 \, \sfE_n \big[ S_n^2 \, \ind_{\{S_n > K\}} \big] \,.
\end{equation*}
We finally choose $K = \frac{1}{2} \sqrt{tn}$, for $t \in (0,\infty)$, to obtain
\begin{equation*}
	\sfE_n \Big[ \tfrac{M_n^2}{n} \, \ind_{\{\frac{M_n^2}{n} > t
	\}} \Big] \le 16 \, 
	\sfE_n \Big[ \tfrac{S_n^2}{n} \, \ind_{\{\frac{S_n^2}{n} >
	\frac{t}{2}\}} \Big] \,, \qquad \forall t > 0 \,.
\end{equation*}
This relation for $\sfP_n = \P_n$ (resp.\ $\sfP_n = \P_n^\meander$)
shows that \eqref{eq:goal3} implies \eqref{eq:goal2}.\qed

\medskip
\noindent
\textbf{Step 3.}
In this step we prove that \eqref{eq:goal3} holds, completing the
proof of Theorem~\ref{th:main2}.
We are going to apply the following standard result, proved below.

\begin{proposition}\label{prop:wc}
Let $(Y_n)_{n\in\N}$, $Y$ be random variables in $L^1$, such that
$Y_n \to Y$ in law. Then $(Y_n)_{n\in\N}$ is UI
if and only if $\lim_{n\to\infty} \E[|Y_n|] = \E[|Y|]$.
\end{proposition}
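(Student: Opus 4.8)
The plan is to reduce everything to the behaviour of the bounded continuous functions $\varphi_L(x) := \min(|x|, L)$, $L > 0$, for which weak convergence of $Y_n$ to $Y$ immediately gives $\E[\varphi_L(Y_n)] \to \E[\varphi_L(Y)]$. Two elementary facts will be used throughout: the identity $(|x| - L)^+ = |x| - \varphi_L(x)$, and the pointwise bound $|x| \, \ind_{\{|x| > K\}} \le 2\,(|x| - \tfrac{K}{2})^+$, which holds because $|x| > K$ forces $|x| \le 2|x| - K$. Together with $Y, Y_n \in L^1$ — which makes $\E[(|Y| - L)^+] \to 0$ and, for each fixed $n$, $\E[(|Y_n| - L)^+] \to 0$ as $L \to \infty$ — these turn the argument into routine $\varepsilon$-$L$ bookkeeping.

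For the ``only if'' direction, assume $(Y_n)_{n\in\N}$ is UI. From $(|x| - L)^+ \le |x|\,\ind_{\{|x| > L\}}$ and the definition of uniform integrability, $\sup_n \E[(|Y_n| - L)^+] \to 0$ as $L \to \infty$; also $\E[(|Y| - L)^+] \to 0$. Writing $\E[|Y_n|] = \E[\varphi_L(Y_n)] + \E[(|Y_n| - L)^+]$ and likewise for $Y$, one gets $|\E[|Y_n|] - \E[|Y|]| \le |\E[\varphi_L(Y_n)] - \E[\varphi_L(Y)]| + \sup_n \E[(|Y_n| - L)^+] + \E[(|Y| - L)^+]$. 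Given $\varepsilon > 0$, fix $L$ so that the last two terms are each below $\varepsilon/3$, then take $n$ large so that the first term (which vanishes by weak convergence, $\varphi_L$ being bounded and continuous) is below $\varepsilon/3$; this gives $\E[|Y_n|] \to \E[|Y|]$.

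For the ``if'' direction, assume $\E[|Y_n|] \to \E[|Y|]$. For each fixed $L$, the identity above and weak convergence give $\E[(|Y_n| - L)^+] = \E[|Y_n|] - \E[\varphi_L(Y_n)] \to \E[|Y|] - \E[\varphi_L(Y)] = \E[(|Y| - L)^+]$. Fix $\varepsilon > 0$ and choose $L_0$ with $\E[(|Y| - L_0)^+] < \varepsilon/4$; then there is $N$ with $\E[(|Y_n| - L_0)^+] < \varepsilon/2$ for all $n \ge N$, whence, by the pointwise bound with $K = 2L_0$, $\E[|Y_n|\,\ind_{\{|Y_n| > 2L_0\}}] \le 2\,\E[(|Y_n| - L_0)^+] < \varepsilon$ for $n \ge N$. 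For the finitely many $n < N$, pick $L_n$ with $\E[|Y_n|\,\ind_{\{|Y_n| > L_n\}}] < \varepsilon$ (possible since $Y_n \in L^1$) and set $K_\varepsilon := \max\{2L_0, L_1, \ldots, L_{N-1}\}$. Then $\sup_n \E[|Y_n|\,\ind_{\{|Y_n| > K_\varepsilon\}}] \le \varepsilon$, which is exactly uniform integrability.

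I do not expect a genuine obstacle here, as each step is a short self-contained estimate; the only point requiring a little care is keeping the order of limits (first $n$, then $L$) and the splitting into finitely many versus all large indices straight. As an alternative route for the ``only if'' part, one could invoke the Skorokhod representation theorem to replace $Y_n, Y$ by almost surely convergent copies and then apply the Vitali convergence theorem, but the truncation argument above avoids any such machinery and is the one I would write out.
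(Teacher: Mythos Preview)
Your proof is correct and complete, but it takes a different route from the paper's. The paper invokes Skorokhod's representation theorem at the outset to assume $Y_n \to Y$ almost surely; then the ``only if'' direction is immediate from Vitali's theorem (UI plus a.s.\ convergence gives $L^1$ convergence), and for the ``if'' direction the paper uses dominated convergence on $|Y_n|\,\ind_{\{|Y_n|\le T\}}$ for continuity points $T$ of the law of $|Y|$, then subtracts to control the tail. You instead work directly with weak convergence via the bounded continuous truncation $\varphi_L(x)=\min(|x|,L)$ and the decomposition $|x|=\varphi_L(x)+(|x|-L)^+$, together with the elementary bound $|x|\,\ind_{\{|x|>K\}}\le 2(|x|-K/2)^+$. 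Your approach is slightly longer to write but more elementary in that it avoids Skorokhod and any issue with continuity points; the paper's approach is terser once Skorokhod is accepted. Amusingly, the alternative you mention in your last sentence is essentially the paper's proof.
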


Let us define
\begin{equation*}
	Y_n := \tfrac{S_n^2}{n} \,.
\end{equation*}
Since $S_n/\sqrt{n}$ under $\P_n$ converges in law
to $Z \sim N(0,1)$,
we have $Y_n \to Z^2$ in law.
Since $\E_n[|Y_n|] = 1 = \E[Z^2]$ for all $n\in\N$,
relation \eqref{eq:goal3} under $\P_n$ follows by Proposition~\ref{prop:wc}.

\smallskip

Next we focus on $\P_n^\meander$.
It is known
\cite{cf:B} that $S_n / \sqrt{n}$ under $\P_n^\meander$ converges in law
toward the Brownian meander at time $1$, 
that is a random variable $V$ with law
$\P(V \in \dd x) := x \, e^{-x^2/2} \, \ind_{(0,\infty)}(x) \, \dd x$.
Therefore $Y_n \to V^2$ in law, under $\P_n^\meander$. 
Since $\E[V^2] = 2$, 
relation \eqref{eq:goal3} under $\P_n^\meander$ is proved once we show that
\begin{equation}\label{eq:fingoa}
	\lim_{n\to\infty} \E_n^\meander \Big[ \tfrac{S_n^2}{n} \Big] = 2 \,.
\end{equation}

To evaluate this limit, we express the law of $S_n/\sqrt{n}$ under $\P_n^\meander$
using fluctuation theory for random walks. By
\cite[equations (3.1) and (2.6)]{cf:Car05}, as $n\to\infty$
\begin{equation*}
	\P_n^\meander\Big( \tfrac{S_n}{\sqrt{n}} \in \dd x \Big) = 
	\big( \sqrt{2\pi}+o(1) \big) \, 
	\int_0^1 \int_0^\infty
	\P\Big( \tfrac{S_{\lfloor n(1-\alpha) \rfloor}}{\sqrt{n}} \in \dd x - \beta \Big) \,
	\ind_{[0,x)}(\beta) \,
	\dd \mu_n(\alpha,\beta)\,,
\end{equation*}
where $\mu_n$ is a finite measure on $[0,1) \times [0,\infty)$, defined
in \cite[eq. (3.2)]{cf:Car05}. Then
\begin{equation*}
\begin{split}
	\E_n^\meander \Big[ \tfrac{S_n^2}{n} \Big] 
	= \big( \sqrt{2\pi}+o(1) \big) 
	\int_0^1 \int_0^\infty
	\Big\{ & \E\Big[ \tfrac{(S_{\lfloor n(1-\alpha) \rfloor}^+)^2}{n} \Big]
	+ 2 \beta \, \E\Big[ \tfrac{S_{\lfloor n(1-\alpha) \rfloor}^+}{\sqrt{n}} \Big]
	+ \beta^2 \, \P
	\Big( \tfrac{S_{\lfloor n(1-\alpha) \rfloor}}{\sqrt{n}} > 0 \Big) 
	\Big\} \, \dd \mu_n \,.
\end{split}
\end{equation*}
By the convergence in law (under $\P$) $S_n/\sqrt{n} \to Z \sim N(0,1)$,
together with the uniform integrability of $(S_n/\sqrt{n})^2$ that we already proved,
we have as $n\to\infty$
\begin{gather*}
	\E\Big[ \tfrac{(S_{\lfloor n(1-\alpha) \rfloor}^+)^2}{n} \Big] 
	\longrightarrow (1-\alpha) \, \E[(Z^+)^2] = \frac{1-\alpha}{2} \,, \\
	\E\Big[ \tfrac{S_{\lfloor n(1-\alpha) \rfloor}^+}{\sqrt{n}} \Big]
	\longrightarrow \sqrt{1-\alpha} \, \E[Z^+] = \tfrac{\sqrt{1-\alpha}}{\sqrt{2\pi}} \,,
	\qquad
	\P\Big( \tfrac{S_{\lfloor n(1-\alpha) \rfloor}}{\sqrt{n}} > 0 \Big) 
	\longrightarrow \P(Z > 0) = \tfrac{1}{2} \,,
\end{gather*}
uniformly for $\alpha \in [0,1-\delta]$,
for $\delta > 0$. 
By \cite[Prop.~5]{cf:Car05} we have the weak convergence
\begin{equation*}
	\mu_n(\dd \alpha, \dd \beta) \Longrightarrow
	\mu(\dd \alpha, \dd \beta) := \frac{\beta}{\sqrt{2\pi} \, \alpha^{3/2}} \, 
	e^{-\frac{\beta^2}{2\alpha}} \, \dd \alpha \, \dd \beta \,,
\end{equation*}
and note that $\mu$ is a finite measure on $[0,1) \times [0,\infty)$.
Then $\lim_{n\to\infty} \E_n^\meander \Big[ \frac{S_n^2}{n} \Big]$ equals
\begin{equation*}
\begin{split}
	\int_0^1 \Big( \int_0^\infty 
	\Big\{ \tfrac{1-\alpha}{2} + 2 \beta \, \tfrac{\sqrt{1-\alpha}}{\sqrt{2\pi}} +
	\tfrac{1}{2}  \beta^2
	\Big\} \, \tfrac{\beta}{\alpha^{3/2}} \, 
	e^{-\frac{\beta^2}{2\alpha}} \, \dd \beta \Big) \dd \alpha
	 = \int_0^1 \Big\{ \tfrac{1-\alpha}{2\sqrt{\alpha}} + \sqrt{1-\alpha}
	+ \sqrt{\alpha} \Big\} \, \dd \alpha = 2 \,,
\end{split}
\end{equation*}
which completes the proof of \eqref{eq:fingoa}.\qed

\begin{proof}[Proof of Proposition~\ref{prop:wc}]
We assume that $Y_n \to Y$ a.s.,
by Skorokhod's representation theorem.
If $(Y_n)_{n\in\N}$ is UI, then $Y_n \to Y$
in $L^1$, hence $\E[|Y_n|] \to \E[|Y|]$.

Assume now that $\lim_{n\to\infty} \E[|Y_n|] = \E[|Y|] < \infty$.
Since $Y_n \to Y$ a.s., dominated convergence yields
$\lim_{n\to\infty} \E[|Y_n| \ind_{\{|Y_n| \le T\}}]
= \E[|Y| \ind_{\{|Y| \le T\}}]$
for $T \in (0,\infty)$ with $\P(|Y|=T) = 0$. Then
\begin{equation*}
	\lim_{n\to\infty} \E[|Y_n| \ind_{\{|Y_n| > T\}}] 
	= \lim_{n\to\infty}
	\big( \E[|Y_n|] - \E[|Y_n| \ind_{\{|Y_n| \le T\}}]  \big)
	= \E[|Y| \, \ind_{\{|Y| > T\}}] \,.
\end{equation*}
Since $\lim_{T\to\infty} \E[|Y| \, \ind_{\{|Y| > T\}}] = 0$,
this shows that $(Y_n)_{n\in\N}$ is UI.
\end{proof}

\section{Proof of Theorem~\ref{th:main3}}
\label{sec:m3}

We fix a random walk $(S_n)_{n\in\N_0}$ which satisfies Assumption~\ref{ass:rw}
in the discrete case (the continuous case
is covered by Theorem~\ref{th:main2}), with $\sigma^2=1$.
We proceed in two steps.

\medskip
\noindent
\textbf{Step 1.}
We assume that $M_n^2/n$ under
$\P_n^\meandertwo$ is UI and we prove that $M_n^2/n$ under
$\P_n^\excursiontwo$ is UI.
As in Section~\ref{sec:prmain2}, it suffices to show that,
with $\bV_{n/2} := (S_1, \ldots, S_{n/2})$ and $n_0$ as in Assumption~\ref{ass:rw},
\begin{equation}\label{eq:RNbound2}
	\sup_{n\ge n_0}
	\, \sup_{\bz \in \Z^{n/2}} \frac{\P_n^\excursiontwo( \bV_{n/2} = \bz )}
	{\P_n^\meandertwo( \bV_{n/2} = \bz )}
	< \infty \,.
\end{equation}

If we define $T := \min\{n\in\N: \ S_n = 0\}$,
we can compute
(recall \eqref{eq:laws2})
\begin{equation*}
	\frac{\P_n^\excursiontwo( \bV_{n/2} = \bz )}{\P_n^\meandertwo( \bV_{n/2} = \bz )}
	 = \frac{\P(T > n) \, \P_{z_{n/2}}(T = n/2)}{\P(T = n) 
	 \, \P_{z_{n/2}}(T > n/2)} \,,
\end{equation*}
where $\P_{x}$ is the law of the random walk started at $S_0 = x$.
By \cite{cf:Kes}, as $n\to\infty$
\begin{equation} \label{eq:localT}
	\P(T = n) = \frac{\sigma}{\sqrt{2\pi} \, n^{3/2}} \, \big(1+o(1)\big) \,,
\end{equation}
hence, summing over $n$, we get $\P(T > n) = 2n \, \P(T=n) \, (1+o(1))$.
Then \eqref{eq:RNbound2} reduces to
\begin{equation}\label{eq:itri2}
	\sup_{n\ge n_0} \,
	\sup_{x \ge 0} \, \frac{n \, \P_x(T=n)}{\P_x(T>n)} < \infty \,.
\end{equation}

Arguing as in the lines after \eqref{eq:itri},
we need to show that the ratio in \eqref{eq:itri2}
is bounded in two cases: when $x \ge \eta \sqrt{n}$ for fixed $\eta > 0$ (\emph{case 1})
and when $x = x_n = o(\sqrt{n})$ (\emph{case 2}).

In case 1, i.e.\ for $x \ge \eta \sqrt{n}$, the denominator in \eqref{eq:itri2}
is bounded away from zero:
\begin{equation*}
	\P_x(T>n) \ge \P_{\lfloor \eta \sqrt{n}\rfloor} (S_1 > 0, \ldots, S_n > 0)
	\xrightarrow[N\to\infty]{} \P_\eta(B_t > 0 \ \forall t \in [0,1]) > 0 \,,
\end{equation*}
where $(B_t)_{t\ge 0}$
is a Brownian motion \cite{cf:Donsker}.
Then the ratio in \eqref{eq:itri2} is bounded because
$\sup_{x\in\Z} \, \P_x(T = n) \le \frac{c'}{n}$
for some $c' \in (0,\infty)$, by \cite[Cor.~1]{cf:K0}.

In case 2, i.e.\ for $x = o(\sqrt{n})$, 
we apply \cite[Thm. 1.1]{Uch11}, which generalizes \eqref{eq:localT}:
\begin{equation*}
	\P_{x}(T=n) = a^*(x) \, \frac{\sigma}{\sqrt{2\pi} \, n^{3/2}}
	\, \big(1+o(1)\big) \qquad
	\text{as } n \to \infty \,, \ \text{uniformly in $x\in\Z$} \,,
\end{equation*}
for a suitable $a^*(x)$ (the potential kernel of the walk). Then
$\P_{x}(T>n) = 2n \, \P_{x}(T=n) \, (1+o(1))$,
hence the ratio
in \eqref{eq:itri2} is bounded.
This completes the proof of \eqref{eq:RNbound2}.\qed

\medskip
\noindent
\textbf{Step 2.}
We prove that $M_n^2/n$ under
$\P_n^\meandertwo$ is UI.
We argue by contradiction: if this does not hold, then there are $\eta > 0$
and $(n_i)_{i\in\N}$, $(K_i)_{i\in\N}$,
with $\lim_{i\to\infty} K_i = \infty$, such that
\begin{equation}\label{eq:contra}
	\E_{n_i}^\meandertwo\Big[ \tfrac{M_{n_i}^2}{n_i} \, 
	\ind_{\{\frac{M_{n_i}^2}{n_i}
	> K_i\}} \Big] \ge \eta \,, \qquad \forall i \in \N \,.
\end{equation}
We are going to deduce that $M_n^2/n$ under $\P_n$ is not UI,
which contradicts Theorem~\ref{th:main2}.

\smallskip

We show below that we can strengthen \eqref{eq:contra},
replacing $\E_{n_i}^\meandertwo$ by $\E_{m}^\meandertwo$ for any $m \in \{n_i,\ldots,
2n_i\}$: more precisely, there exists $\eta' > 0$ such that
\begin{equation}\label{eq:contra2}
	\E_{m}^\meandertwo\Big[ \tfrac{M_{n_i}^2}{n_i} \, 
	\ind_{\{\frac{M_{n_i}^2}{n_i}
	> K_i\}} \Big] \ge \eta' \,, \qquad
	\forall i \in \N \,, \
	\forall m \in \{n_i, \ldots, 2n_i\} \,.
\end{equation}
To exploit \eqref{eq:contra2},
we work on the time horizon $2n$, for fixed $n\in\N$.
We split any path $S = (S_0, \ldots, S_{2n})$ with $S_0 = 0$ in two parts
$\tilde S = (S_0, S_1, \ldots, S_\sigma)$ and
$\hat S = (S_{\sigma}, S_{\sigma + 1}, \ldots, S_{2n})$,
where $\sigma := \sigma_{2n} := \max\{i \in \{0,\ldots, 2n\}: \ S_i = 0\}$.
If $S$ is chosen according to the unconditioned law $\P_{2n}$,
then $\hat S$ has law $\P_{2n-\sigma}^\meandertwo$, conditionally on $\sigma$.
If we set $\hat M_{2n} := \max |\hat S| = \max_{\sigma \le i \le 2n} |S_i|$,
the bound $M_{2n} \ge \hat M_{2n}$ gives
\begin{equation*}
\begin{split}
	\E \Big[ \tfrac{(M_{2 n})^2}{2n} \, 
	\ind_{\{\frac{(M_{2 n})^2}{2n}
	> \frac{K}{2}\}} \Big] 
	& \ge
	\E \Big[ \tfrac{(\hat M_{2 n})^2}{2n} \, 
	\ind_{\{\frac{(\hat M_{2 n})^2}{2n}
	> \frac{K}{2}\}} \Big] 
	= \sum_{r=0}^{2n} \E \bigg[ 
	\E_{2n-r}^\meandertwo \Big[ \tfrac{M_{2 n - r}^2}{2n} \, 
	\ind_{\{\frac{M_{2 n - r}^2}{n}
	> K\}} \Big] \, \ind_{\{\sigma=r\}} \bigg] \,.
\end{split}
\end{equation*}
We now restrict the sum to $r \le n$, so that
$M_{2n - r}^2 \ge M_{n}^2$, to get
\begin{equation*}
\begin{split}
	\E \Big[ \tfrac{(M_{2 n})^2}{2n} \, 
	\ind_{\{\frac{(M_{2 n})^2}{2n}
	> \frac{K}{2} \}} \Big] 
	& \ge 
	\tfrac{1}{2} \, \P(\sigma_{2n} \le n)  \, 
	\inf_{n \le m \le 2n} \,
	\E_{m}^\meandertwo \Big[ \tfrac{M_{n}^2}{n} \, 
	\ind_{\{\frac{M_{n}^2}{n}
	> K\}} \Big]  \,.
\end{split}
\end{equation*}
Note that $\lim_{n\to\infty} \P(\sigma_{2n} \le n)
= \P( B_t \ne 0 \ \forall t \in (\frac{1}{2}, 1]) =: p > 0$ (actually $p=\frac{1}{2}$, by the arcsine law),
hence $\gamma := \inf_{n\in\N} \P(\sigma_{2n} \le n) > 0$.
If we take $n = 2 n_i$ and $K = K_i$, by \eqref{eq:contra2}
\begin{equation*}
	\liminf_{K\to\infty} \ \sup_{n\in\N} \
	\E \Big[ \tfrac{(M_{n})^2}{n} \, 
	\ind_{\{\frac{(M_{n})^2}{n}
	> \frac{K}{2} \}} \Big] \,\ge\,
	\inf_{i\in\N} \,
	\E \Big[ \tfrac{(M_{2 n_i})^2}{2n_i} \, 
	\ind_{\{\frac{(M_{2 n_i})^2}{2n_i}
	> \frac{K_i}{2} \}} \Big] 
	\,\ge\, \frac{\gamma \, \eta'}{2} > 0 \,.
\end{equation*}
This means that $M_n^2/n$ under $\P_n$ is not UI,
which contradicts Theorem~\ref{th:main2}.

\smallskip

It remains to prove \eqref{eq:contra2}.
We fix $\sfC \in (0,\infty)$, to be determined later.
We may assume that $K_i \ge \sfC$ for all $i\in\N$.
To deduce \eqref{eq:contra2} from \eqref{eq:contra},
we show that for some $c > 0$
\begin{equation} \label{eq:pargo}
	\inf_{n \in \N, \ m \in \{n, \ldots, 2n\},\ z \in \Z: \, z \ge \sfC \sqrt{n}}
	\ \frac{\P_m^\meandertwo(M_n = z)}{\P_n^\meandertwo(M_n = z)}
	\,\ge\, c  \,.
\end{equation}
Fix $m \ge n$ and $z > 0$. If  we sum over the last $\ell \le n$ 
for which $M_n = |S_\ell|$, we can write
\begin{equation*}
	\P_m^\meandertwo(M_n = z)
	= \sum_{\ell = 1}^n \P_m^\meandertwo( M_{\ell - 1} \le z, \, |S_\ell| = z, \,
	|S_i| < z \ \forall i= \ell+1, \ldots, n) \,.
\end{equation*}
We write $\P_m^\meandertwo(\,\cdot\,) = \P(\,\cdot\,|\, E_m)$, with
$E_m := \{S_1 \ne 0, \ldots, S_m \ne 0\}$, and we apply the Markov property at time $\ell$.
The cases $S_\ell = z$ and $S_\ell = -z$ give a similar contribution and
we do not distinguish between them
(e.g.\ assume that the walk is symmetric). Then
\begin{equation*}
\begin{split}
	\P_m^\meandertwo(M_n = z)
	= \frac{1}{\P(T > m)} \sum_{\ell = 1}^n
	& \ \P( M_{\ell - 1} \le z, \, |S_\ell| = z, \, E_\ell) \,  
	\underset{A}{\underbrace{
	\P_z ( |S_i| < z \ \forall 1 \le i \le n-\ell , \ E_{m-\ell})}} \,.
\end{split}
\end{equation*}
The same expression holds if we replace
$\P_m^\meandertwo$ by $\P_n^\meandertwo$, namely
\begin{equation*}
\begin{split}
	\P_n^\meandertwo(M_n = z)
	= \frac{1}{\P(T > n)} \sum_{\ell = 1}^n
	& \ \P( M_{\ell - 1} \le z, \, |S_\ell| = z, \, E_\ell) \, 
	\underset{B}{\underbrace{
	\P_z ( |S_i| < z \ \forall 1 \le i \le n-\ell , \ E_{n-\ell})}} \,.
\end{split}
\end{equation*}
Since $\P(T > m) \le \P(T > n)$, to prove \eqref{eq:pargo} 
we show that $A \ge c \, B$, with $c>0$. We bound
\begin{equation*}
	B \le \P_z ( S_i < z \ \forall i = 1 , \ldots,  n-\ell)
	= \P_0( E_{n-\ell}^-) \,,
\end{equation*}
where we set $E_{k}^- := \{S_1 < 0, \ldots, S_k < 0\}$.
Similarly, for $z \ge \sfC\sqrt{n}$  we bound
\begin{equation*}
\begin{split}
	A & \ge 
	\P_z ( S_i < z \ \forall i=1, \ldots, n-\ell , 
	\ S_i > 0 \ \forall i=1, \ldots, m-\ell) \\
	& = \P_0 ( E_{n-\ell}^- , 
	\ S_i > -z \ \forall i=1, \ldots, m-\ell)  \ge \P_0 ( E_{n-\ell}^- ) \,
	\underset{D}{\underbrace{\P_0 \big( (-S_i) < \sfC \sqrt{n} \ \forall i=1, \ldots, m-\ell
	\,\big|\, E_{n-\ell}^- \big)}} \,.
\end{split}
\end{equation*}
It remains to show that $D \ge c$.
Let us set $\tilde S_i := - S_i$ and $\tilde E_k^+ := E_k^- = \{\tilde S_1 > 0, 
\ldots, \tilde S_k > 0\}$.
If we write $r := n-\ell$,
for $m \in \{n, \ldots, 2n\}$,
we have $m - \ell = r + (m-n) \le r + n$, hence 
\begin{equation} \label{eq:bopr}
	D \ge \P\big( \tilde S_i < \tfrac{1}{2}\sfC \sqrt{r} \ \forall i=1,\ldots, r \,
	\big| \, \tilde E_r^+ \big) 
	\cdot \P\big( \tilde S_i < \tfrac{1}{2}\sfC \sqrt{n} \ \forall i=1,\ldots, n \,\big) \,,
\end{equation}
by the Markov property,
since $(\tilde S_{j})_{j \ge r}$
under $\P(\,\cdot\,| \tilde E_r^+)$ is the random walk $\tilde S$
started at $\tilde S_r$.

By \cite{cf:B,cf:Donsker},
as $r \to \infty$ the two probabilities in the right
hand side of \eqref{eq:bopr} converge respectively to
$\P( \sup_{t\in [0,1]} m_t < \frac{1}{2}\sfC)$
and $\P( \sup_{t\in [0,1]} B_t < \frac{1}{2}\sfC)$, 
where $B = (B_t)_{t\ge 0}$ is Brownian motion and
$m = (m_t)_{t\in [0,1]}$ is Brownian
meander.
Then, if we fix $\sfC > 0$ large enough, the right hand side
of \eqref{eq:bopr} is $\ge c > 0$ for all $r, n \in \N_0$.\qed

\section{Proof of Theorem~\ref{th:main1}}
\label{sec:main1}

Let us set $\bQ_N := \bP_N \circ \cR_N^{-1}$.
We prove that conditions \eqref{it:1} and \eqref{it:2} in Theorem~\ref{th:main1}
are necessary and sufficient for the tightness of $(\bQ_N)_{N\in\N}$.

\medskip

\noindent
\textbf{Necessity.}
The necessity of condition \eqref{it:1} is clear: just note that,
by Definition~\ref{def:bPN},
the law $\bP_N$ coincides with $P_N^\rig$ (resp.\ with $P_N^\bulk$)
if we choose the regeneration
law $p_N$ to be concentrated on the single set $\{0\}$ (resp.\
on the single set $\{0,N\}$).

\smallskip

To prove necessity of condition \eqref{it:2},
we assume by contradiction that \eqref{it:2} fails.
Then there exists $\eta > 0$ and
two sequences $(t_n)_{n\in\N}$, $(a_n)_{n\in\N}$,
with $\lim_{n\to\infty} a_n = \infty$, such that
\begin{equation} \label{eq:sth}
	P_{t_n}^\bulk\bigg( \frac{M_{t_n}}{\sqrt{t_n}}
	> a_n \bigg) \ge \frac{\eta}{a_n^2} \,, \qquad \forall n \in \N \,, \qquad
	\text{where} \quad
	M_t := \max_{0 \le i \le t} |x_i| \,.
\end{equation}
We may assume that $a_n \in \N$ 
(otherwise consider $\lfloor a_n \rfloor$ and redefine $\eta$).

Define $N_n := t_n \, a_n^2$ and let $p_{N_n}$ be the regeneration
law concentrated on the single set 
$\{0, t_n, 2t_n, \ldots, N_n  - t_n, N_n\}$.  Let $\bP_{N_n}$ be the corresponding probability
on $\R^{[N_n]}$, see Definition~\ref{def:bPN}.
We now show that $\bQ_N = \bP_N \circ \cR_N^{-1}$  \emph{is not tight on $C([0,1])$}.

Any path $f(t)$ under $\bQ_{N_n}$ vanishes for 
$t \in \{0, \frac{1}{a_n^2}, \frac{2}{a_n^2}, \ldots,
1 - \frac{1}{a_n^2}, 1\}$, which becomes dense in $[0,1]$ as $n\to\infty$.
Then, if $(\bQ_{N_n})_{n\in\N}$ were tight,
it would converge weakly to the law concentrated
on the single path $(f_t \equiv 0)_{t\in [0,1]}$.
We rule this out by showing that
\begin{equation} \label{eq:cotr}
	\liminf_{n\to\infty} \, \bQ_{N_n} \bigg(
	\sup_{t\in [0,1]} |f_t| > 1 \bigg)
	\ge 1 - e^{-\eta} > 0 \,.
\end{equation}
For $x \in \R^{[N_n]}$ and $j = 1,\ldots, a_n^2$ we define
$M_{t_n}^{(j)} := \max_{i \in \{(j-1) t_n, \ldots, j t_n\}} |x_i|$,
so that 
\begin{equation*}
	\bQ_{N_n} \bigg(
	\sup_{t\in [0,1]} |f_t| > 1 \bigg)
	= \bP_{N_n} \bigg(
	\max_{i=0,1,\ldots, N_n} |x_i| > \sqrt{N_n} \bigg)
	= \bP_{N_n}\Bigg( \max_{j=1, \ldots, a_n^2} 
	\frac{M_{t_n}^{(j)}}{\sqrt{t_n}} > a_n \Bigg) \,.
\end{equation*}
The random variables $M_{t_n}^{(j)}$ for $j=1,\ldots, a_n^2$ are independent
and identically distributed,
because they refer to different excursions. Then we conclude by \eqref{eq:sth}:
\begin{equation*}
\begin{split}
	\bQ_{N_n} \bigg(
	\sup_{t\in [0,1]} |f_t| > 1 \bigg)
	& = 1 - \Bigg( 1 - 
	P_{t_n}^\bulk\bigg( \frac{M_{t_n}}{\sqrt{t_n}}
	> a_n \bigg) \Bigg)^{a_n^2} \ge 1 - \Bigg( 1 - 
	\frac{\eta}{a_n^2} \Bigg)^{a_n^2}
	\xrightarrow[\ n \to \infty \ ]{} \ 1 - e^{-\eta} \,.
\end{split}
\end{equation*}

\medskip

\noindent
\textbf{Sufficiency.}
We assume that conditions \eqref{it:1} and \eqref{it:2} in Theorem~\ref{th:main1} hold
and we prove that $(\bQ_N)_{N\in\N}$ is tight in $C([0,1])$, that is
\begin{equation} \label{eq:figo}
	\forall \eta > 0: \qquad
	\lim_{\delta \downarrow 0} \ \sup_{N\in\N} \
	\bQ_N\big( \Gamma(\delta) > \eta \big) = 0 \,,
\end{equation}
where $\Gamma(\delta)(f) := \sup_{|t-s| \le \delta} |f_t - f_s|$ 
denotes the continuity modulus of
$f \in C([0,1])$.

Given a finite subset $U = \{u_1 < \ldots < u_n \} \subseteq [0,1]$
and points $s, t \in [0,1]$, we write
$s \sim_U t$ iff 
no point $u_i \in U$ lies
between $s$ and $t$. Then we define
\begin{equation*}
	\tilde \Gamma_{U}(\delta)(f) := 
	\sup_{s,t \in [0,1]: \ s \sim_U t, \ |t-s| \le \delta} |f_t - f_s| \,.
\end{equation*}
Plainly, if $f(u_i) = 0$ for all $u_i \in U$, then
$\Gamma(\delta)(f) \le 2 \, \tilde \Gamma_{U}(\delta)(f)$.
This means that in \eqref{eq:figo} we can replace
$\Gamma(\delta)(f)$ by $\tilde \Gamma_U(\delta)(f)$, where $U$
is any subset of $[0,1]$ on which $f$ vanishes.
We fix $U = \{\frac{t_1}{N}, \ldots, \frac{t_n}{N}\}$,
where $t_i$ are the regeneration epochs of $\bP_N$. It remains to show that
\begin{equation} \label{eq:figo2}
	\forall \eta > 0: \qquad
	\lim_{\delta \downarrow 0} \ \sup_{N\in\N} \
	\bQ_N\big( \tilde\Gamma_U(\delta) > \eta \big) = 0 \,.
\end{equation}

We set for short $Q_t^\rig := P_t^\rig \circ \cR_t^{-1}$ and 
$Q_t^\bulk := P_t^\bulk \circ \cR_t^{-1}$.
By Definition~\ref{def:bPN}
\begin{equation*}
\begin{split}
	\bQ_N\big( \tilde\Gamma_U(\delta) \le \eta \big) =
	\sum_{n=1}^{N+1}  \sum_{0 = t_1 < \ldots < t_n \le N} \,
	\!\!\!\!p_N(\{t_1,\ldots, t_n\}) \,
	& \prod_{i=1}^{n-1} 
	Q_{t_{i+1}-t_i}^\bulk\Big( \Gamma(\tfrac{N}{t_{i+1}-t_i} \delta) \le
	\eta \sqrt{\tfrac{N}{t_{i+1}-t_i}}
	\,\Big) \, \times \\
	& \qquad \times \, Q_{N - t_n}^\rig\Big( \Gamma(\tfrac{N}{N-t_n} \delta) \le
	\eta \sqrt{\tfrac{N}{N-t_n}} \, \Big) \,.
\end{split}
\end{equation*}
Note that we have the original continuity modulus $\Gamma$.
Let us set
\begin{align}
	\label{eq:gbulk}
	g_\eta^\bulk(\delta) &:= \inf_{\substack{N\in\N, \ 2 \le n \le N+1, \\ 
	0 = t_1 < \ldots < t_n \le N}}\,
	\prod_{i=1}^{n-1} 
	Q_{t_{i+1}-t_i}^\bulk\Big( \Gamma(\tfrac{N}{t_{i+1}-t_i} \delta) \le
	\eta \sqrt{\tfrac{N}{t_{i+1}-t_i}}
	\,\Big) \\
	\notag
	g_\eta^\rig(\delta) &:= \inf_{N\in\N, \ 1 \le t < N}\,
	Q_{N-t}^\rig\Big( \Gamma(\tfrac{N}{N-t} \delta) \le
	\eta \sqrt{\tfrac{N}{N-t}} \, \Big) \,,
\end{align}
so that we can bound $\bQ_N\big( \tilde\Gamma_U(\delta) \le \eta \big) 
\ge \, g_\eta^\bulk(\delta) \, g_\eta^\rig(\delta)$
(we recall that $p_N(\cdot)$ is a probability). We complete the proof
of \eqref{eq:figo2} by showing that
\begin{equation*}
	\forall \eta > 0: \qquad
	\lim_{\delta\downarrow 0}
	\, g_\eta^\bulk(\delta) \, g_\eta^\rig(\delta) \ge 1 \,.
\end{equation*}

\smallskip

We first show that $\lim_{\delta \downarrow 0} g_\eta^\rig(\delta) \ge 1$, 
for every $\eta > 0$.
We fix $\theta \in (0,1)$ and consider two regimes.
For $t < (1-\theta) N$ we can bound (recall that
$(Q_{\ell}^\rig)_{\ell\in\N}$ is tight by assumption)
\begin{equation*}
	\inf_{N\in\N, \ 1 \le t < (1-\theta)N}\,
	Q_{N-t}^\rig\Big( \Gamma(\tfrac{N}{N-t} \delta) \le
	\eta \sqrt{\tfrac{N}{N-t}} \, \Big) \,\ge\,
	\inf_{\ell\in\N} \, Q_{\ell}^\rig\big( \Gamma(\tfrac{\delta}{\theta}) \le
	\eta \, \big) \,\xrightarrow[\delta\downarrow 0]{}\, 1 \,.
\end{equation*}
On the other hand, for $t \ge (1-\theta) N$ we can bound
\begin{equation*}
	\inf_{N\in\N, \ (1-\theta)N \le t < N}\,
	Q_{N-t}^\rig\Big( \Gamma(\tfrac{N}{N-t} \delta) \le
	\eta \sqrt{\tfrac{N}{N-t}} \, \Big)
	\,\ge\, \inf_{\ell\in\N} \, Q_{\ell}^\rig
	\Big( \max_{s \in [0,1]}|f_s| \le
	\tfrac{1}{2} \tfrac{\eta}{\sqrt{\theta}} \Big) =: h_\eta(\theta) \,.
\end{equation*}
For any $\eta > 0$, we have $\lim_{\delta\downarrow 0}
	\, g_\eta^\rig(\delta) \,\ge\,
	\lim_{\theta\downarrow 0} \, h_\eta(\theta) = 1$,
by the tightness of $(Q_{\ell}^\rig)_{\ell\in\N}$.

\smallskip

To complete the proof,
we show that $\lim_{\delta \downarrow 0} g_\eta^\bulk(\delta) \ge 1$, for every $\eta > 0$.
Note that
\begin{equation*}
	\inf_{t\in\N} \, Q_{t}^\bulk\Big( \max_{s \in [0,1]} |f_s| \le
	a \,\Big)
	\,=\, \inf_{t\in\N} \, P_{t}^\bulk\Big( \max_{i = 0,\ldots, t} |x_i| \le
	a \, \sqrt{t} \,\Big) \,\ge\, 1 - \frac{\epsilon(a)}{a^2} \,, 
\end{equation*}
where $\lim_{a \uparrow \infty} \epsilon(a) = 0$,
by assumption \eqref{it:2}.
We may assume that $a \mapsto \epsilon(a)$ is non increasing.
Fix $\theta \in (0,1)$.
Given a family of epochs $0 \le t_1 < \ldots < t_n \le N$, we distinguish
two cases.
\begin{itemize}
\item For $\theta N < t_{i+1}-t_i \le N$ we can bound
\begin{equation*}
	Q_{t_{i+1}-t_i}^\bulk\Big( \Gamma(\tfrac{N}{t_{i+1}-t_i} \delta) \le
	\eta \sqrt{\tfrac{N}{t_{i+1}-t_i}}
	\,\Big) \,\ge\,
	\inf_{t\in\N} \, Q_{t}^\bulk\Big( \Gamma(\tfrac{\delta}{\theta}) \le
	\eta \,\Big) \,=:\, F_{\eta,\theta}(\delta) \,,
\end{equation*}
and note that for fixed $\eta,\theta$ we have
$\lim_{\delta \downarrow 0} F_{\eta,\theta}(\delta) = 1$, 
because $(Q_{t}^\bulk)_{t\in\N}$ is tight.

\item For $t_{i+1}-t_i \le \theta N$ we can bound
\begin{equation*}
\begin{split}
	Q_{t_{i+1}-t_i}^\bulk\Big( \Gamma(\tfrac{N}{t_{i+1}-t_i} \delta) \le
	\eta \sqrt{\tfrac{N}{t_{i+1}-t_i}}
	\,\Big) 
	& \,\ge\,
	Q_{t_{i+1}-t_i}^\bulk\Big( \max_{s\in [0,1]} |f_s| \le
	\tfrac{\eta}{2} \sqrt{\tfrac{N}{t_{i+1}-t_i}}
	\,\Big) \\
	& \,\ge\, 1 - \tfrac{4 (t_{i+1}-t_i)}{\eta^2\, N} \, 
	\epsilon\big(\tfrac{\eta}{2 \, \sqrt{\theta}}\big)
	 \,\ge\, \exp\Big( - \tfrac{8 (t_{i+1}-t_i)}{\eta^2\, N} \, 
	\epsilon\big(\tfrac{\eta}{2 \, \sqrt{\theta}}\big) \Big) \,,
\end{split}
\end{equation*}
where the last inequality holds for $\theta > 0$ small,
by $1-z \ge e^{-2z}$ for $z \in [0,\frac{1}{2}]$.
\end{itemize}
We can have $t_{i+1}-t_i > \theta N$ for at most $\lfloor 1/\theta \rfloor$ values of $i$,
hence
\begin{equation*}
\begin{split}
	g_\eta^\bulk(\delta)
	& \,\ge\,  F_{\eta,\theta}(\delta)^{\frac{1}{\theta}} \,
	\prod_{i = 1}^{n-1}
	\exp\Big( - \tfrac{8 (t_{i+1}-t_i)}{\eta^2\, N} \, 
	\epsilon\big(\tfrac{\eta}{2 \, \sqrt{\theta}}\big) \Big)
	\,\ge\, F_{\eta,\theta}(\delta)^{\frac{1}{\theta}} \,
	\exp\Big( - \tfrac{8 }{\eta^2} \, 
	\epsilon\big(\tfrac{\eta}{2 \, \sqrt{\theta}}\big) \Big) \,.
\end{split}
\end{equation*}
Given $\eta > 0$ and $\epsilon > 0$, we first fix $\theta > 0$ small enough,
so that the exponential is greater than $1-\epsilon$;
then we let $\delta \to 0$, so that 
$F_{\eta,\theta}(\delta)^{\frac{1}{\theta}} \to 1$.
This yields $\lim_{\delta \downarrow 0} g_\eta^\bulk(\delta) \ge 1-\epsilon$.
As $\epsilon > 0$ was arbitrary, we get
$\lim_{\delta \downarrow 0} g_\eta^\bulk(\delta) \ge 1$.\qed

\section*{Acknowledgements}

I thank Tal Orenshtein for reviving the interest on the 
problem and for stimulating email exchanges.
I also thank Denis Denisov, Ron Doney, Giambattista Giacomin,
Vitali Wachtel and Lorenzo Zambotti
for discussions and references.
This work is supported by the PRIN Grant 20155PAWZB ``Large Scale Random Structures''.

\smallskip

\end{document}